\newfont{\bssten}{cmssbx10}
\newfont{\bssnine}{cmssbx10 scaled 900}
\newfont{\bssdoz}{cmssbx10 scaled 1200}
\newtheorem{theorem}{Theorem}
\newtheorem{definition}{Definition}
\newtheorem{remark}{Remark}
\newtheorem{proposition}{Proposition}
\newtheorem{corollary}{Corollary}
\newtheorem{ex}{Example}
\def\/{\, | \,}
\def\ind{{\mathchoice {\rm 1\mskip-4mu l} {\rm 1\mskip-4mu l}
		{\rm 1\mskip-4.5mu l} {\rm 1\mskip-5mu l}}}
\def\N{{\mathbb N}}
\def\esp#1{{\mathbb E}\left[#1\right]}
\newcommand{\pae}[1]{\mbox{$\lfloor \kern-1pt #1 \kern-1pt \rfloor$}}
\newcommand{\paep}[1]{\mbox{$\lceil \kern-1pt #1 \kern-1pt \rceil$}}
\def\N{{\mathbb N}}
\def\R{{\mathbb R}}
\def\maV{{V}}
\def\maJ{{\mathcal J}}
\def\maH{{\mathcal H}}
\def\mbH{{\mathbb H}}
\def\mumin{\mu_{\mbox{\scriptsize{min}}}}
\def\mumax{\mu_{\mbox{\scriptsize{max}}}}
\def\mumini{\mu_{\mbox{\emph{\scriptsize{min}}}}}
\def\mumaxi{\mu_{\mbox{\emph{\scriptsize{max}}}}}
\newcommand\suite[1]{\left\{#1,\,n\in\N\right\}}
\newcommand\suiten[1]{\left\{#1,\,n\in\N\right\}}
\newcommand\gre{\textbf{e}}
\newcommand\grx{\textbf{x}}
\newcommand\maF{{\mathcal F}}
\newcommand\maB{{\mathcal B}}
\newcommand\maC{{\mathcal C}}
\newcommand\maE{{\mathcal E}}
\newcommand\maT{{\mathcal T}}
\begin{document}
	
\title{A stochastic matching model on hypergraphs}
\author{Youssef Rahme and Pascal Moyal}

\begin{abstract}
Motivated by applications to a wide range of assemble-to-order systems, operations scheduling, healthcare systems and collaborative economy applications, 
we introduce a stochastic matching model on hypergraphs, extending the 
model in \cite{MaiMoy16} to the case of hypergraphical (rather than graphical) matching structures. 
We address a discrete-event system under a random input of single items, simply using the system as an interface to be matched by groups of two or more. 
We study the stability of this stochastic system, for various hypergraph geometries. 
\end{abstract}

\maketitle
\section{Introduction}
\label{sec:intro}
Matching models have recently received a growing interest in the literature on queueing models in which compatibilities between the requests need to be taken into account. 
This is a natural enrichment of service systems in which the requests must be matched, or put in relation, rather than being served. 
Among other fields of applications, this is a natural representation of peer-to-peer networks, interfaces of the collaborative economy (such as car and ride sharing, dating websites, 
and so on), assemble-to-order systems, job search applications and healthcare systems (blood banks and organ transplant networks). 
All these applications share the same common ground: elements/items/agents enter a system that is just an interface to put them in relation, and relations are possible only if the 
``properties" (whatever this means) of the elements make them compatible. 

A stochastic matching model can thus roughly be defined as follows: items enter a system at random times, and require 
to be matched by pairs. 
The possible pairs are given by a compatibility graph whose nodes represent the classes of items, 
and the classes of incoming items are randomly drawn from a prescribed 
distribution on the set of nodes. Unmatched items are queued, waiting for a future compatible item, and leave the system by pairs as soon as they are matched. 
If the items enter the system individually, as in \cite{MaiMoy16}, \cite{MoyPer17} and more recently in \cite{MBM17} and \cite{AKRW18}, we say that the system is a General stochastic Matching model 
(GM), following the terminology of \cite{MaiMoy16}. If the classes of items are partioned into two subsets (say the classes of ``customers'' and the classes of ``servers'') 
and enter the system pairwise, as in the seminal papers \cite{CKW09,AW11} (which viewed such systems as generalizations of skill-based customer/server queueing systems), and then \cite{BGM13}, \cite{ABMW17}, \cite{AKRW18} and \cite{MBM18}, we say that the system is a Bipartite stochastic Matching model (BM). 
Specific models for designated applications are studied: \cite{BDPS11} on kidney transplants, \cite{TW08} on housing allocations systems, 
\cite{BC15,BC17} on taxi hubs or \cite{OW19} on ride sharing models. In another line of research, such stochastic matching architectures are addressed from the point of view of stochastic optimization in \cite{BM14}, \cite{GW14} and \cite{NS16}, among others. 

In most of the above works (except in particular cases in \cite{GW14} and \cite{NS16}) the matching of items are exclusively {\em pairwise}: 
a job or a house with an applicant, a kidney with a patient, a cab with a customer, two users of a dating website, etc. 
However, several of the above applications should naturally incorporate the possibility of matching items 
by groups of more than two. Let us exemplify this on a concrete example: in organ transplants, (in)-compatibility between givers and receivers are given by a variety of factors, and 
mostly by blood types and immunological factors. 
In kidney exchange programs, items represent intra-incompatible couples $(A,B)$ 
(e.g. a patient $A$ waiting for a transplant and $B$ a parent of his/hers, incompatible with $A$ for a potential organ donation), 
entering a system to find another intra-incompatible couple $(A',B')$ that is compatible with it, in the sense that 
$A$ can receive an organ from $B'$ and $A'$ can receive from $B$. Then the ability of such a system to accommodate all requests and to maximize 
the number of successful transplants and avoid congestion, is translated into the positive recurrence of a stochastic process representing the stochastic system over time. 
Then if we view the items as the {\em couples}, and translate the ``cross-compatibility'' 
(i.e. $A$ can receive from $B'$ and $A'$ can receive from $B$) into the existence of an edge between node $(A,B)$ and node $(A',B')$, 
such a system is a typical application of the GM introduced in \cite{MaiMoy16}.

But let now consider the case where such exchanges $(A,B) \leftrightarrow (A',B')$ and $(A',B') \leftrightarrow (A'',B'')$ 
cannot be realized, but $A$ can receive from $B'$, $A'$ can receive from ${B''}$ and ${A''}$ can receive from $B$. 
Then it is natural to consider the possibility of executing the three transplants contemporarily, i.e. to match the 
triplet $(A,B)$, $(A',B')$, $(A'',B'')$ altogether. 
In several countries including the U.S., such ``exchanges'' by groups of 3 (or more) are allowed, which raises the issue of maximizing ``matchings'' that do not 
coincide with sets of edges, but of sets of subsets of nodes of cardinality $3$ or more. Hence the need to consider matching models on compatibility structures that are 
{\em hypergraphs} rather than graphs, i.e., a set of nodes $V$ equipped with a set of subsets of $V$ of cardinality 3 or more. 

Among other fields of applications, the same modeling is suitable to assemble-to-order systems, in which case components are produced by independent processes, and assembled by groups in a given order. All the same, in operations management, specific operations may be made available at given random times, to be coordinated later by 
groups of 2 or more. In all cases, the system controller confronts a random flux of arrivals of items (or operations), and needs to match (or combine/coordinate) them 
by sub-groups of 2 or more, hence following an hypergraphical structure. 

The main purpose of the present work is thus to introduce a stochastic matching model, in the sense defined above, on a hypergraphical compatibility structure. 
This model is formally defined as follows: items enter the system by single arrivals, and get matched by 
groups of 2 or more, following compatibilities that are represented by a given hypergraph. 
A matching policy determines the matchings to be executed in the case of a multiple choice, and the unmatched items are stored in a buffer, waiting for a future match. 

As for any dynamical random system, a first natural question to address is that of stochastic stability, i.e., the existence of a steady state: this step is necessary for investigating or 
comparing systems in the long run, in a stationary regime. In this paper, we formally define the stability region of the system as the region of measures on the set of nodes, rendering 
the natural Markov chain of the system positive recurrent, for a given compatibility hypergraph and a given matching policy. This paper is thus devoted to assessing the stability region 
of stochastic matching models on hypergraphs. In a nutshell, we show that such systems are not easily stabilizable, by exhibiting wide classes of models having an empty stability region. We then provide, or give bounds for, the stability region of particular systems. As will be made precise below, a crucial step is to investigate the very geometry of the hypergraphs under consideration. 

This paper is organized as follows: we start by some preliminary in Section \ref{sec:prelim}, and in particular by introducing the main definitions and properties of hypergraphs. 
In Section \ref{sec:model} we formally introduce the present model. In Section \ref{sec:Ncond} we provide necessary conditions of stability for the present class of systems: 
as will be developed therein, and unlike the particular case of the GM on graphs (see \cite{MaiMoy16}), for which a natural necessary condition could be obtained, we introduce 
various necessary conditions that depend on distinct geometrical properties of the considered hypergraphs. We then deduce from this, classes of hypergraphs for which the corresponding matching model cannot be stable, see Section \ref{sec:instable}. Finally, in Section \ref{sec:stable} we provide the precise stability region in the particular 
case where the compatibility hypergraph is $r$-uniform and complete, and then complete up to a partition of its hyperedges (see the precise definitions of these objects below). 
We conclude this work in Section \ref{sec:conclu}. 

\section{Preliminary}
\label{sec:prelim}

\subsection{General notation}
\label{subsec:notation}
Let $\R$, $\R^+$, $\N$ and $\N^+$ denote respectively the sets of real numbers, of non-negative real numbers, natural integers and positive integers, respectively. 
For $a$ and $b$ in $\N$, denote by $\llbracket a,b \rrbracket$ the integer interval $[a,b]\cap \N$.  We let $a\wedge b$ and $a\vee b$ denote respectively the minimum and the maximum of two numbers $a,b\in\R$.

Given a finite set $B$, we denote by $\mathscr M(B)$ the set of probability measures on $B$ having $B$ as exact support. 

Let $q\in \N^+$. For any $i\in \llbracket 1,q \rrbracket$, let $\gre_i$ denote the vector of $\mathbb{N}^{q}$ of components $(\gre_i)_j=\delta_{ij},\;j\in \llbracket 1,q \rrbracket$.  
The null vector of $\mathbb{N}^{q}$ is denoted by $\mathbf 0$. The norm of any vector $u \in \N^q$ is denoted by $\parallel u \parallel =\sum\limits_{i=1}^{q} u_i$. 

Let $A$ be a finite set. The cardinality of $A$ is denoted by $|A|$. We let $A^*$ denote the free monoid associated to $A$, i.e. the set of finite words over the alphabet $A$. 
The length of a word $w\in A^*$ is denoted by $|A|$. We write any word $w\in A^*$ as $w=w(1)w(2)...w(|w|)$. 
We denote for any $a\in A$, by $|w|_a$ the number of occurrences of letter $a$ in the word $w$. 
Having set an ordering on $A$, and denoting by $1,2,...,|A|$ the elements of $A$ in increasing order, the commutative image of a word $w\in A$ is the $\N^{|A|}$-valued vector $[w]$ defined by 
$[w]=\left(|w|_1,...,|w|_{|A|}\right)$ i.e. the vector whose $i$-th coordinate is the number of occurrences of letter $i$ in the word $w$. 
Finally, for a word $w\in A^*$ and an ordered lists of letters $(a,b,c,...)$ appearing in that order in $w$, we denote by $w\setminus_{(a,b,c,...)}$, the word of $A^*$ obtained by just deleting the letters $a,b,c...$ in $w$. 


\subsection{Hypergraphs}
\label{subsec:prelimhypergraphs}
For easy reference, let us first introduce the basics of Hypergraph theory that will be used in this paper. 
A thorough presentation of the topic can be found e.g. in \cite{Ber89}. 

\begin{definition}\label{Def:Hypergraph}
\rm
An hypergraph $\mathbb H$ is defined as a couple ($\maV$,$\maH$), where{:}
\begin{itemize}
\item The finite set $\maV$ is the set of nodes of $\mathbb H$. We let $q(\mbH)$ be the cardinality of $\maV$, and say that the hypergraph is of order $q(\mbH)$. 
\item A finite set $\maH:=\left\{H_1,...,H_{m(\mbH)}\right\}$ of subsets of $\maV$ such that $\bigcup_{i=1}^{m(\mbH)}H_i=\maV$, 
whose elements are called hyperedges of $\mbH$. 
\end{itemize}
We then say that the hypergraph is simple (or a {Sperner family}) if $H_i\subset H_j$ implies $i=j$ for all $i,j \in \llbracket 1,m(\mbH) \rrbracket$, i.e.,  
no hyperedge is included in another one. Whenever no ambiguity is possible, we often write $q:=q(\mbH)$, $m:=m(\mbH)$. 
A sub-hypergraph of $\mbH$ is an hypergraph $\mbH'=(V,\maH')$ such that $\maH' \subset \maH$. 
\end{definition}

\begin{definition}\label{Def:rankdegree}
\rm
Let $\mathbb H=(\maV,\maH)$ be an hypergraph. 
The {rank} of $\mathbb H$ is the largest size of an hyperedge, i.e. the integer $r(\mathbb H)=\max_{j\in\llbracket 1,m(\mbH)\rrbracket}|H_j|$; 
the {anti-rank} of $\mathbb H$ is defined as $a(\mathbb H)=\min_{j\in\llbracket 1,m(\mbH)\rrbracket}|H_j|$, i.e. the smallest size of an hyperedge. 
If there exists a constant $r$ such that $r(\mathbb H)= a(\mathbb H)=r$, then $\mathbb H$ is said $r$-{uniform}. 
The degree of a node $i\in \maV$ is the number of hyperedges $i$ belongs to, i.e. $d(i)=\sum_{\ell=1}^{m(\mbH)}\ind_{H_{\ell}}(i)$. If there exists a constant $d$ such that $d(i)=d$ for any $i$, then $\mathbb H$ is said $d$-{regular}.
\end{definition}

\begin{definition}\label{Def:GraphRepresentatif}
\rm 
The {representative graph} of an hypergraph $\mathbb H=(\maV,\maH)$ is the graph $L(\mathbb H)=(\maH,\maE)$ whose nodes are the elements of $\maH$, 
and such that $(H_i,H_j) \in \maE$ (i.e. $H_i$ and $H_j$ share an edge in the graph) if and only if $H_i\cap H_j\neq\emptyset$. 
The hypergraph $\mathbb H$ is said {connected} if $L(\mathbb H)$ is connected. 
\end{definition}

As is easily seen, any $2$-uniform hypergraph is a graph, whose edges are the elements of $\maH$, and any simple and connected hypergraph contains no isolated node, i.e. has anti-rank at least 2. 


\begin{definition}
\label{def:transverse}
\rm
A set $T\subset \maV$ is a {transversal} of
$\mathbb{H}$ if it meets all its {hyper}edges, that is, $T\cap H \neq\emptyset, \mbox{ for any }H \in \maH.$ 
The set of transversals of $\mbH$ is denoted by $\maT(\mbH)$. 
A transversal $T$ is said minimal if it is of minimal cardinality among all transversals of $\mbH$. 
The transversal number of the hypergraph $\mbH$ is the cardinality of its minimal transversals. It is denoted $\tau(\mbH)$. 
\end{definition}



For any set $A \subset \maV$, we denote 
\begin{equation}
\label{eq:defHA}
\maH(A) = \left\{H \in \maH\;:\; H \cap A\ne \emptyset\right\},
\end{equation}
i.e. the set of hyperedges that intersect with $A$. With some abuse, for any node $i\in\maV$, we write $\maH(i):=\maH(\{i\})$. 

Throughout this paper, all considered hypergraphs are simple and connected.

\section{The model}
\label{sec:model}

All the random variables (r.v.'s, for short) hereafter 
are defined on a common probability space $(\Omega,\maF,\mathbb P)$. 

\subsection{Stochastic matching model on a hypergraph}
\label{subsec:model}
A (discrete-time, hypergraphical) stochastic matching model is specified by a triple $(\mathbb H,\Phi,\mu)$, such that:
\begin{itemize}
\item $\mathbb H=(\maV,\maH)$ is a simple and connected hypergraph, termed {\em matching hypergraph} of the model,
\item $\Phi$ is a matching policy, precisely defined in section \ref{subsec:Policy} below, 
\item $\mu$ is an element of $\mathscr M(\maV)$.
\end{itemize}

\noindent The matching model $(\mathbb H,\Phi,\mu)$ is then defined as follows. At each time point $n\in\N$, 
\begin{enumerate}
\item An item enters the system. Its class $V_n$ is drawn from the measure $\mu$ on $\maV$, independently of everything else. 
(Thus the sequence of classes of incoming items $\suite{V_n}$ is i.i.d of common distribution $\mu$.)
\item The incoming item then faces the following alternative: 
        \begin{itemize}
\item[(i)] If there exists in the buffer, at least one set of items whose respective set of classes forms, together with $V_n$, an hyperedge of $\maH$, then it is the role of the matching policy $\Phi$ to select one of these sets of classes, say 
         $\{i_1,...,i_m\}$. Then the $m+1$ items of respective classes $i_1,...,i_m,V_n$ are matched 
         together and leave the system right away. Denoting  $H_j:=\{i_1,...,i_m,V_n\} \in \maH$, we then say that $V_n$ {\em completes} a {\em matching} of type $H_j$ at time $n$, and we denote $H(n)=H_j$, the matching performed at $n$. 
\item[(ii)] {E}lse, the item is stored in the buffer of the system, waiting for a future match, and we write $H(n)=\emptyset$. 
\end{itemize}
\end{enumerate}

\begin{ex}
\rm 
Consider the matching hypergraph $\mathbb{H}=(\maV,\maH)$ with $\maV=\{1,2,3,4\}$ and $\maH=\left\lbrace\{1,2,3\},\{1,2,4\},\{1,3,4\},\{2,3,4\}\right\rbrace$, 
see Figure \ref{fig:complete}. The dynamic matchings of the realization $\suite{V_n(\omega)}=2,3,4,1,1,2,3,3,4,2,2,....$ is represented in Figure \ref{fig:Dyn}. 

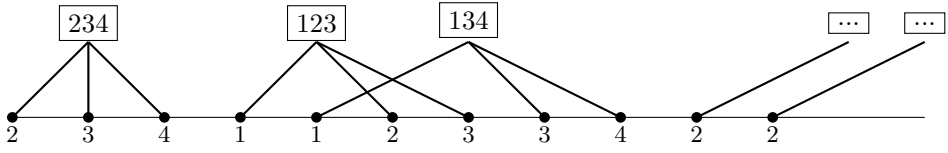
\begin{figure}[htp]
	\begin{center}
		\begin{tikzpicture}
		\draw[-] (1,2) -- (13,2);
		\fill (1,2) circle (2pt) node[below] {\small{2}};
		\node[draw] at (2,3.25) {234};
		\draw[-, thick] (1,2) -- (2,3);
		\draw[-, thick] (2,2) -- (2,3);
		\draw[-, thick] (3,2) -- (2,3);
		\fill (2,2) circle (2pt) node[below] {\small{3}};
		\fill (3,2) circle (2pt) node[below] {\small{4}};
		\fill (4,2) circle (2pt) node[below] {\small{1}};
		\node[draw] at (5,3.25) {123};
		\draw[-, thick] (4,2) -- (5,3);
		\draw[-, thick] (6,2) -- (5,3);
		\draw[-, thick] (7,2) -- (5,3);
		\fill (5,2) circle (2pt) node[below] {\small{1}};
		\fill (6,2) circle (2pt) node[below] {\small{2}};
		\fill (7,2) circle (2pt) node[below] {\small{3}};
		\fill (8,2) circle (2pt) node[below] {\small{3}};
		\fill (9,2) circle (2pt) node[below] {\small{4}};
		\fill (10,2) circle (2pt) node[below] {\small{2}};
		\node[draw] at (7,3.29) {134};
		\draw[-, thick] (5,2) -- (7,3);
		\draw[-, thick] (8,2) -- (7,3);
		\draw[-, thick] (9,2) -- (7,3);
		\fill (11,2) circle (2pt) node[below] {\small{2}};
		\node[draw] at (12,3.25) {...};
		\node[draw] at (13,3.25) {...};
		\draw[-, thick] (10,2) -- (12,3);
		\draw[-, thick] (11,2) -- (13,3);
		\end{tikzpicture}
		\caption[smallcaption]{\label{fig:Dyn} The matching model in action, on the matching hypergraph of Figure \ref{fig:complete}.}
	\end{center}
\end{figure}


\end{ex}

\subsection{System dynamics}
\label{subsec:dyn}
Fix a hypergraphical matching model on a hypergraph $\mbH$ of order $q:=q(\mbH)$. 
Define for all $n\in\N$ the $\N^{q}$-valued r.v. $X_n=\left(X_n(1),...,X_n\left(q\right)\right),$
where for all $i \in \maV$, $X_n(i)$ is the number of items of class $i$ in the buffer at time $n$ (taking into account the arrival occurring at time $n$).  
The vector $X_n$ is then called {\em class-content} of the system at time $n$. 
Define for any subset $B$ of $\maV$, $X_n(B)$ to be the class-content of elements of $B$: 
\[X_n(B)=\sum_{i\in B}X_n(i),\]
in a way that the total number of items in the buffer at time $n$ is given by $\parallel X_n \parallel$. 
The {\em buffer-content} of the system at time $n$ is the word of $V^*$ whose letters are the classes of the items in the buffer, in increasing order of their arrivals. Namely, 
\[W_n = W_n(1) W_n(2) ..... W_n(|W_n|),\]
where for any $\ell$, $W_n(\ell)$ is the class of the $\ell$-th oldest item in line. Notice that $X_n$ is nothing but the commutative image of $W_n$, i.e. 
$X_n=[W_n],\,n\in \N.$ 
 
To simply describe the dynamics of the processes $\suite{X_n}$ and $\suite{W_n}$, for any $u\in \N^q$ and $H \in \maH$ we define the following elements of $\{0,1\}^q$: 
$p(u)$ is the vector of coordinates $p(u)_i=\ind_{\{u(i)>0\}}$, $i\in \llbracket 1,q \rrbracket$, $\gamma(H)$ is the {trace} of $H$, i.e. the vector of $\{0,1\}^q$ defined by 
$\gamma(H)_i = \ind_{H}(i)$ for all $i \in \llbracket 1,q \rrbracket$ and 
\[\Gamma(u) =  \left\{H \in \maH:\, p\left(u\right)=\gamma(H)\right\}.\]


\subsection{Matching policies}
\label{subsec:Policy}
Formally, an admissible matching policy is a rule of choice of the item(s) matched with the incoming item at time $n+1$ in case of a multiple choice, that can be made solely on the basis 
of the knowledge of the buffer-content $W_n$ at $n$, for any $n\in\N$. Notice that at all $n$, 
$\Gamma\left(X_n+e_{V_{n+1}}\right)=\Gamma\left([W_n]+e_{V_{n+1}}\right)$ represents the (possibly empty) set of all hyperedges in $\maH$ that can be completed by the arrival of $V_{n+1}$ in 
a system having buffer-content $W_n$ at time $n$. 

\subsubsection{Matching policies that depend on the arrival times}

\subsubsection*{First Come, First Matched}
In First Come, First Matched ({\sc fcfm}), the chosen match of the incoming item $V_{n+1}$ at time $n+1$, is the hyperedge containing the oldest item in line among 
all hyperedges that can be completed by $V_{n+1}$.  
Specifically: 
\begin{equation*}
W_{n+1}=\left\lbrace \begin{array}{ll}
W_nV_{n+1}& \qquad \textrm{if}\;\Gamma\left([W_n]+e_{V_{n+1}}\right)=\emptyset;\\
W_nV_{n+1}\setminus_{(i,j,...,k)}& \qquad \mbox{else, for }H(n)=\{i,j,...,k\}\in \Gamma\left([W_n]+e_{V_{n+1}}\right),
\end{array}
\right.
\end{equation*}
where, in the case where $\left |\Gamma\left(X_n+e_{V_{n+1}}\right) \right| \ge 2$, i.e. there are more than one possible matchings containing $V_{n+1}$ at $n+1$, 
$H(n)=\{i,j,...,k\}$ is the hyperedge whose first element $i$ appearing in $W_n$ appears first among all elements of $\Gamma\left([W_n]+e_{V_{n+1}}\right).$ 

\subsubsection*{Last Come, First Matched}
Likewise, in Last Come, First Matched ({\sc lcfm}) the newly arrived item at $n+1$ is matched to form the hyperedge containing the youngest possible element, i.e. 
\begin{equation*}
W_{n+1}=\left\lbrace \begin{array}{ll}
W_nV_{n+1}& \qquad \textrm{if}\;\Gamma\left([W_n]+e_{V_{n+1}}\right)=\emptyset;\\
W_nV_{n+1}\setminus_{(i,j,...,k)}& \qquad \mbox{else, for }H(n)=\{i,j,...,k\} \in \Gamma\left([W_n]+e_{V_{n+1}}\right),
\end{array}
\right.
\end{equation*}
where $H(n)=\{i,j,...,k\}$ is the hyperedge whose last element $k$ appearing in $W_n$ appears last among all elements of $\Gamma\left([W_n]+e_{V_{n+1}}\right).$ 

\subsubsection{Matching policies that depend on the class-content} 

A wide class of natural matching policies can be implemented given the sole knowledge of the class-content upon arrival times. In such cases we have for all $n$, 
\begin{equation}
\label{buffer:commutative}
X_{n+1}=\left\lbrace \begin{array}{ll}
X_n+e_{V_{n+1}} & \qquad \textrm{if}\;\Gamma\left(X_n+e_{V_{n+1}}\right)=\emptyset;\\
X_n+e_{V_{n+1}} - \gamma(H(n))& \qquad \mbox{else, for some }H(n)\in \Gamma\left(X_n+e_{V_{n+1}}\right),
\end{array}
\right.
\end{equation}
where the choice of the hyperedge $H(n)$ depends on the matching policy. 
Several examples are provided below,

\subsubsection*{Match the Longest.} 
The matching policy $\Phi$ is {\em Match the longest} (denoted $\textsc{ml})$ if for all $n$, the match realized is that 
of the hyperedge having the most elements in storage at $n$. In other words the chosen hyperedge $H(n)$ 
in the second case of (\ref{buffer:commutative}) satisfies 
\[H(n) = \mbox{argmax\,}\left\{X_n(H)\,:\,H \in \Gamma\left(X_n+e_{V_{n+1}}\right)\right\},\]
ties being broken uniformly at random (and independently of everything else) among hyperedges. 

\subsubsection*{Match the Shortest.} 
Analogously, {\em Match the shortest} (denoted $\textsc{ms})$ corresponds to the choice 
\[H(n) = \mbox{argmin\,}\left\{X_n(H)\,:\,H \in \Gamma\left(X_n+e_{V_{n+1}}\right)\right\},\]
ties being broken uniformly at random, as above.  

\subsubsection*{Fixed priority.} 
In the context of fixed priorities, each vertex $i \in \maV$ is assigned a full ordering of the hyperedges and choses to be matched with the first matchable 
hyperedge following this order. Formally, to each node $i$ is associated a permutation $\sigma_i$ of the index set $\llbracket 1,d(i) \rrbracket$, and if we denote $\maH(i)=\left\{H_{i_1},H_{i_2},...,H_{i_{d(i)}}\right\}$, 
then at any time $n$, 
\begin{equation}
\label{eq:defpriority}
H(n) = H_{i_{\sigma_i(j)}},\mbox{ where }j=\mbox{min\,}\left\{k\in \llbracket 1,d(i) \rrbracket\,:\,X_n\left(H_{i_{\sigma(k)}}\right)>0\right\}.
\end{equation}

\subsubsection*{Random.} 
For this matching policy, the priority order defined above is not fixed, and is drawn uniformly at random upon each arrival, i.e. for any $n$, 
$H(n)$ is defined as in (\ref{eq:defpriority}), for a permutation $\sigma_i(n)$ that is drawn, independently of everything else, uniformly at random 
among all permutations of $\llbracket 1,d(i) \rrbracket$. 

\medskip

It is easily seen that under any admissible policy the sequence $\suite{W_n}$ is a $V^*$-valued Markov chain with respect to the filtration generated by the sequence $\suite{V_n}$. 
Additionally, in the cases where $\Phi=$ {\sc ml}, {\sc ms}, a fixed priority or a random policy, the sequence $\suite{X_n}$ is a $\N^{|V|}$-valued Markov chain.

\subsection{Stability of the matching model}
\label{subsec:defstab}
We say that the matching model $(\mathbb H,\Phi,\mu)$ is stable if the Markov chain $\suite{W_n}$ (and thereby $\suite{X_n}$) is positive recurrent. 
For a given hypergraph $\mathbb H=(\maV,\maH)$ and a given matching policy $\Phi$, we define 
the {\em stability region} associated to $\mathbb H$ and $\Phi$ as the set of probability measures on $\maV$ rendering the model $(\mathbb H,\Phi,\mu)$ stable, i.e. 
\[\textsc{Stab}(\mathbb H,\Phi)=\left\{\mu \in\mathscr M(\maV):\;(\mathbb H,\Phi,\mu)\mbox{ is stable }\right\}.\]
We then say that an hypergraph $\mbH$ is stabilizable if $\textsc{Stab}(\mathbb H,\Phi)$ is non-empty for some matching policy $\Phi$. 
If not, $\mbH$ is said non-stabilizable.

\section{Necessary conditions of stability}
\label{sec:Ncond} 

Fix a matching model $(\mathbb H,\Phi,\mu)$ on an hypergraph $\mathbb H=(\maV,\maH)$. 
Denote for any $n$, $B \subset V$ and $\maB \subset \maH$, by $A_n(B)$ the number of arrivals of elements in $B$ and by $M_n(\maB)$ the number of matchings of hyperedges in $\maB$ realized up to $n$, i.e. 
\begin{align*}
A_n(B) &= \sum_{k=1}^n \ind_{\{V_k \in B\}};\\
M_n(\maB) &= \sum_{k=1}^n \ind_{\{H(k) \in \maB\}},
\end{align*}
and with some abuse, denote $A_n(i)=A_n(\{i\})$ and $M_n(H)=M_n(\{H\})$ for any $i\in V$ and $H \in \maH$. 
Observe that the following key relation holds for all $B \subset \maV$, 
\begin{equation}
X_n(B) = A_n(B) - \sum\limits_{H \in \maH} \left|H \cap B \right| M_n\left(H\right)\ge 0,\,\,n\in\N, \label{eq:base}
\end{equation}
since the number of items of classes in $B$ at any time $n$ is precisely the number of arrivals of such items up to time $n$, minus 
the number of these items that leave the system upon each matching of an hyperedge that intersects with $B$.

\subsection{General conditions}
\label{subsec:Ncond}
\noindent We start by introducing several `universal' stability conditions. 
Fix an hypergraph $\mathbb H=(\maV,\maH)$ throughout the section, and let us define the set 
\begin{equation*}
\maC_2(\mbH) = \left\{B\subset V\,:\,\max_{H\in\maH}|B\cap H| \ge 2\right\}, 
\end{equation*}
and the following sets of measures,
\begin{align*}
 \mathscr N^{\scriptsize{+}}_1(\mathbb H) &= \left\{\mu\in\mathscr M(\maV):\,\forall B \subset V,\,\mu(B)\;\le \sum_{H\in \maH}\left|H \cap B\right|\min_{j \in H}\mu(j)\right\};\\
\mathscr N^{\scriptsize{-}}_1(\mathbb H) &= \left\{\mu\in\mathscr M(\maV):\,\forall B \in \maC_2(\mbH),\,\mu(B)\;<\sum_{H\in \maH}\left|H \cap B\right|\min_{j \in H}\mu(j)\right\};\\
 \mathscr N^{\tiny{--}}_1(\mathbb H)&= \left\{\mu\in\mathscr M(\maV):\, \forall B \subset V,\,\mu(B)\;<\sum_{H\in \maH}\left|H \cap B\right|\min_{j \in H \cap \bar B}\mu(j)\right\}.
\end{align*}
We have the following result, 
\begin{proposition}
\label{prop:Ncond1}
For any connected hypergraph $\mathbb H$ and any admissible matching policy $\Phi$,
\begin{equation*}
\textsc{Stab}(\mbH,\Phi) \subset \mathscr N^{\scriptsize{+}}_1(\mathbb H)\cap \mathscr N^{\scriptsize{-}}_1(\mathbb H) \cap \mathscr N^{\tiny{--}}_1(\mathbb H).
\end{equation*}
\end{proposition}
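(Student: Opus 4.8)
The plan is to turn the positive recurrence of $\suite{W_n}$ into a system of flow equations via the conservation identity \eqref{eq:base}, read in the stationary regime. Fix $\mu\in\textsc{Stab}(\mbH,\Phi)$, so that $\suite{W_n}$ is positive recurrent; since $\mbH$ is connected, the empty buffer lies in the recurrence class and is visited infinitely often with finite mean return time. Combining the strong law of large numbers for the i.i.d.\ classes $\suite{V_n}$ with the ratio ergodic (renewal--reward) theorem applied along the excursions away from the empty state, I would obtain, almost surely, $A_n(B)/n\to\mu(B)$ for all $B\subset\maV$, $M_n(H)/n\to\rho(H)$ for all $H\in\maH$ and some constants $\rho(H)\ge0$, and $X_n(B)/n\to0$ --- the last point because the content never exceeds the length of the current excursion, whose running maximum is $o(n)$. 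Dividing \eqref{eq:base} by $n$ and letting $n\to\infty$ then gives the balance identity
\begin{equation*}
\mu(B)=\sum_{H\in\maH}\left|H\cap B\right|\rho(H),\qquad B\subset\maV,
\end{equation*}
and, for $B=\{j\}$, the nodewise conservation law $\mu(j)=\sum_{H\ni j}\rho(H)$, $j\in\maV$.

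The inclusion in $\mathscr N^{\scriptsize{+}}_1(\mbH)$ is then immediate: since $\rho\ge0$, conservation gives $\rho(H)\le\mu(j)$ for each $j\in H$, hence $\rho(H)\le\min_{j\in H}\mu(j)$, and substituting this into the balance identity yields exactly the defining inequality of $\mathscr N^{\scriptsize{+}}_1(\mbH)$. Keeping instead only the constraints coming from the nodes of $H$ lying outside $B$ gives $\rho(H)\le\min_{j\in H\cap\bar B}\mu(j)$ whenever $H\cap\bar B\ne\emptyset$, and hence the non-strict counterparts of the inequalities defining $\mathscr N^{\tiny{--}}_1(\mbH)$ (the terms with $H\subset B$ being harmless under the convention $\min\emptyset=+\infty$) and, restricting to $B\in\maC_2(\mbH)$, of $\mathscr N^{\scriptsize{-}}_1(\mbH)$.

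The main obstacle is to upgrade these bounds to the \emph{strict} inequalities demanded by $\mathscr N^{\scriptsize{-}}_1(\mbH)$ and $\mathscr N^{\tiny{--}}_1(\mbH)$. Strictness cannot be read off the first-order balance alone: equality there is fully compatible with positive recurrence, since the content of any set is a zero-drift reflected process stabilised by the reflection at $\mathbf 0$, and the pointwise drift of the linear functional $x\mapsto x(B)$ is unsigned. I would therefore argue by contradiction on the recurrence type. If equality held for some admissible $B$, the conservation law would force every hyperedge $H$ meeting $B$ to saturate its bound, i.e.\ $\rho(H)=\mu(j_H)$ at a scarce node $j_H$ of $H$ --- internal to $B$ for $\mathscr N^{\scriptsize{-}}_1$, external for $\mathscr N^{\tiny{--}}_1$ --- which in turn forces $\rho(H')=0$ for every other $H'\ni j_H$, so that $j_H$ is matched exclusively through $H$. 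Exploiting $|H\cap B|\ge2$ for $B\in\maC_2(\mbH)$ (respectively the complementary structure for $\mathscr N^{\tiny{--}}_1$), the aim is to use this rigidity to embed in the content of $B$ a critical, driftless reflected walk with bounded increments --- the hypergraphical analogue of the symmetric random walk that governs a single matched edge --- which is at best null recurrent, contradicting positive recurrence. The delicate part is precisely to verify that the saturated configuration yields a genuinely driftless component rather than one with merely non-positive drift, since the removals of $j_H$-items are constrained by the availability of its partners in $H$; this is where I expect the real work to lie, and where an alternative route through a carefully tailored non-negative submartingale may be required.
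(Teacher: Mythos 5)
Your first inclusion, $\textsc{Stab}(\mbH,\Phi)\subset\mathscr N_1^{+}(\mbH)$, is correct, and your route to it differs from the paper's: you work forwards from positive recurrence, extracting ergodic limits $A_n(B)/n\to\mu(B)$, $M_n(H)/n\to\rho(H)$, $X_n(B)/n\to 0$, hence the balance identity $\mu(B)=\sum_{H\in\maH}|H\cap B|\,\rho(H)$ and the nodewise law $\mu(j)=\sum_{H\ni j}\rho(H)$, from which the weak inequality follows by $\rho(H)\le\min_{j\in H}\mu(j)$. The paper instead argues the contrapositive: if some $B$ violates the inequality, then bounding $M_n(H)\le A_n(\ell_\mu(H))$ and applying the SLLN forces $\limsup_n X_n(B)/n>0$, i.e. transience. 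Both are sound, and your balance-equation formulation also delivers, as you note, the non-strict forms of the other two inclusions.

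The genuine gap is exactly where you locate it: the strict inequalities defining $\mathscr N_1^{-}(\mbH)$ and $\mathscr N_1^{--}(\mbH)$ are never established --- you end with a plan (saturation of $\rho$, rigidity, embedding a critical reflected walk) and an explicit admission that its crucial step, showing the embedded component is genuinely driftless rather than merely of non-positive drift, remains open. The paper resolves this with a device your plan misses: in the equality case it does not track the matching dynamics at all, but replaces each $M_n(H)$ by its a priori upper bound $A_n(\ell_\mu(H))$ (resp. $A_n(\ell_\mu(H\cap\bar B))$ for $\mathscr N_1^{--}$) and studies $Y_n=A_n(B)-\sum_{H\in\maH}|H\cap B|\,A_n(\ell_\mu(H))$. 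This is a random walk with i.i.d.\ bounded increments determined by the arrival sequence alone; its drift equals $\mu(B)-\sum_{H\in\maH}|H\cap B|\,\mu(\ell_\mu(H))$, which is zero precisely by the assumed equality, so the driftlessness you were worried about is automatic rather than something to be extracted from saturated matching rates. The walk is non-degenerate because an arrival at $\ell_\mu(H)$ for an $H$ with $|H\cap B|\ge 2$ gives a jump of at most $1-|H\cap B|\le -1$ (this is where $B\in\maC_2(\mbH)$ enters; for $\mathscr N_1^{--}$, an arrival at a minimizer lying outside $B$ plays this role). Hence $Y_n$ is a null-recurrent, diffusively oscillating walk, while $X_n(B)\ge Y_n$ for all $n$; positive recurrence of $\suite{W_n}$ would keep $X_n(B)$ tight, returning to $0$ with finite mean inter-passage times, which is incompatible with dominating such a walk. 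Without this step (or an equivalent completion of your submartingale idea), your proposal proves only $\textsc{Stab}(\mbH,\Phi)\subset\mathscr N_1^{+}(\mbH)$ together with non-strict versions of the other two inclusions, which is strictly weaker than the Proposition.
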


\begin{proof}
Fix $\mbH=(V,\maH)$, and let us denote for any $\mu\in\mathscr M(V)$ and $B \subset V$, 
\[\ell_\mu(B) = \mbox{argmin} \left\{\mu(j)\,:\,j\in B\right\},\] 
where $\ell_\mu(H)$ is chosen arbitrarily whenever the above is not unique.  
Fix an admissible policy $\Phi$. 
We first prove the inclusion of $\textsc{Stab}(\mbH,\Phi)$ in $\mathscr N^{\scriptsize{+}}_1(\mathbb H)$. 
For this, suppose that $\mu \in \mathscr M(V)$ is such that there exists $B\subset V$ such that 
\begin{equation}
\label{eq:contrNcond1}
\mu(B)\;> \sum\limits_{H\in\maH}\left|H \cap B\right|\mu\left(\ell_\mu(H)\right).
\end{equation}

Then, for any $H \in\maH$ and any $n\in\N$ we have that 
$M_n(H) \le \min_{j \in H} A_n(j) \le A_n\left(\ell_\mu(H)\right)$ and thus,  
from the left equality in (\ref{eq:base}) that 
\begin{equation}
\label{eq:compareXY}
{X_n(B) \over n} \ge {A_n(B) \over n} - \sum\limits_{H\in\maH}\left|H \cap B\right| {A_n\left(\ell_\mu(H)\right) \over n}.
\end{equation}
Applying the SLLN to the right-hand side of (\ref{eq:compareXY}) implies that 
\[\limsup_n {X_n(B) \over n}\ge \mu(B) - \sum\limits_{H\in\maH}\left|H \cap B\right| \mu\left(\ell_\mu(H)\right) >0,\] 
implying that $X_n(B)$ goes a.s. to infinity and thereby (as $X_n=[W_n]$ for all $n$), the transience of $\suite{W_n}$. 

\medskip

Regarding the inclusion in $\mathscr N_1^{\scriptsize{-}}(\mathbb H)$, suppose now that $\mu$ is such that there exists $B\in \maC_2(\mbH)$ such that there is an equality in (\ref{eq:contrNcond1}). 
Then the Markov chain $\suite{Y_n}$ defined as
\begin{equation*}
Y_n =\left(A_n(B)\;-\;\sum\limits_{H\in\maH}\left|H \cap B\right|A_n\left(\ell_\mu(H)\right)\right),\quad n\in\N,
\end{equation*}
is a random walk with drift 0, that is different from the identically null process, since it makes down jumps of at least 1 
at any arrival time of an element of any $H$ such that $|B\cap H| \ge 2$. 
Hence $\suite{Y_n}$ is null recurrent. Would the chain $\suite{W_n}$ be positive recurrent, the sequence 
$\suite{X_n}$ would visit the state 
$\mathbf 0$ infinitely often, with inter-passage time at $\mathbf 0$ of finite expectation. 
Thus from (\ref{eq:compareXY}), the sequence $\suite{Y_n}$ would be positive recurrent, an absurdity. 
Thus the stability region is included in 
$\mathscr N_1^{\scriptsize{-}}(\mathbb H)$. 

\medskip

Last, to show the inclusion in $\mathscr N_1^{\tiny{--}}(\mathbb H)$, we let $\mu\in\mathscr M(V)$ be such that for some $B\subset V$, 
\begin{equation}
\label{eq:contrNcond2}
\mu(B)\;\ge \sum\limits_{H\in\maH}\left|H \cap B\right|\min_{j \in H\cap \bar B} \mu(j). 
\end{equation}
If the inequality above is strong, then (\ref{eq:contrNcond1}) holds, implying the transience of $\suite{W_n}$. 
If now the equality in (\ref{eq:contrNcond2}) is weak, then 
the Markov chain $\suite{\bar Y_n}$ defined by 
\begin{equation*}
\bar Y_n =\left(A_n(B)\;-\;\sum\limits_{H\in\maH}\left|H \cap B\right|A_n\left(\ell_\mu(H\cap \bar B)\right)\right),\quad n\in\N,
\end{equation*}
is a non zero random walk with null drift, and we conclude as above. 
\end{proof}




Let us now define the following set of measures, 
\begin{equation*}
 \mathscr N_2(\mathbb H)= \left\{\mu\in\mathscr M(\maV):\,\quad \forall T \in \maT(\mathbb H)\;,\,\mu(T)\;>{1\over r(\mbH)}\right\}.
 \end{equation*}
We also have that
\begin{proposition}
\label{prop:Ncond2}
For any connected hypergraph $\mathbb H$ and any admissible matching policy $\Phi$,
\begin{equation*}
\textsc{Stab}(\mbH,\Phi) \subset \mathscr N_2(\mathbb H).
\end{equation*}
\end{proposition}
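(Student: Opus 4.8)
The plan is to prove the contrapositive. I want to show that if $\mu\notin\mathscr N_2(\mathbb H)$, i.e. there exists a transversal $T\in\maT(\mbH)$ with $\mu(T)\le 1/r(\mbH)$, then the model $(\mbH,\Phi,\mu)$ cannot be stable. So I fix such a $T$ and exploit the two defining features at hand: since $T$ meets every hyperedge, $|H\cap T|\ge 1$ for all $H\in\maH$, and since $r(\mbH)$ is the rank, $|H|\le r(\mbH)$ for all $H\in\maH$.

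First I would turn the conservation relation (\ref{eq:base}) into a lower bound on the total content $\parallel X_n\parallel = X_n(\maV)$. Taking $B=\maV$ in (\ref{eq:base}) gives $\parallel X_n\parallel = n-\sum_{H\in\maH}|H|\,M_n(H)$, while taking $B=T$ gives $\sum_{H\in\maH}|H\cap T|\,M_n(H)=A_n(T)-X_n(T)\le A_n(T)$. Combining these with $|H\cap T|\ge 1$ and $|H|\le r(\mbH)$ yields
\[
M_n(\maH):=\sum_{H\in\maH}M_n(H)\le\sum_{H\in\maH}|H\cap T|\,M_n(H)\le A_n(T),\quad\text{hence}\quad \parallel X_n\parallel\;\ge\;n-r(\mbH)\,A_n(T)=:Z_n.
\]
The process $\suite{Z_n}$ is a random walk whose one-step increment is $1-r(\mbH)\,\ind_{\{V_n\in T\}}$, with drift $1-r(\mbH)\,\mu(T)\ge 0$ under our assumption.

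In the strict case $\mu(T)<1/r(\mbH)$ the drift is positive, so by the SLLN $Z_n/n\to 1-r(\mbH)\mu(T)>0$, and therefore $\parallel X_n\parallel\to\infty$ almost surely; since a recurrent chain must return to $\mathbf 0$ (equivalently $\parallel X_n\parallel=0$) infinitely often, this forces the transience of $\suite{W_n}$. The delicate case is the boundary $\mu(T)=1/r(\mbH)$, where the drift vanishes, and here I would argue exactly as in the null-recurrent part of the proof of Proposition \ref{prop:Ncond1}. In this case $\suite{Z_n}$ is a non-degenerate mean-zero random walk: its increment takes the two distinct values $1$ and $1-r(\mbH)$, each with positive probability (for $T\subsetneq\maV$ one has $0<\mu(T)<1$ since $\mu$ has full support, while $T=\maV$ gives $\mu(T)=1>1/r(\mbH)$ and is excluded). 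Supposing, for contradiction, that $\suite{W_n}$ is positive recurrent, $\suite{X_n}$ visits $\mathbf 0$ infinitely often with successive return times $\tau_k$ of finite mean inter-passage time. By the strong Markov property the excursion increments $Z_{\tau_k}-Z_{\tau_{k-1}}$ are i.i.d., and by Wald's identity their common mean equals $\bigl(1-r(\mbH)\mu(T)\bigr)\,\mathbb E[\tau_1-\tau_0]=0$; being non-degenerate, the embedded walk $\suite{Z_{\tau_k}}$ then satisfies $\limsup_k Z_{\tau_k}=+\infty$ a.s. But $\parallel X_{\tau_k}\parallel=0$ gives $Z_{\tau_k}\le\parallel X_{\tau_k}\parallel=0$ for every $k$, a contradiction.

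The main obstacle is precisely this boundary case: the linear bound $\parallel X_n\parallel\ge Z_n$ only delivers $\liminf_n\parallel X_n\parallel/n\ge 0$ when $\mu(T)=1/r(\mbH)$, which does not by itself rule out positive recurrence. The resolution is to pass to the regenerative structure of $\suite{X_n}$ under the positive recurrence hypothesis and use Wald's identity to show that $Z$ sampled at the regeneration epochs $\tau_k$ would have to oscillate above $0$, contradicting $Z_{\tau_k}\le 0$. In writing the full argument I would be careful to justify that $\mathbb E[\tau_1-\tau_0]<\infty$ (from positive recurrence), that Wald's identity applies (the $\tau_k$ are stopping times for the filtration generated by $\suite{V_n}$ and the per-step increments are i.i.d. and integrable), and that the excursion increments are genuinely non-degenerate, so that standard one-dimensional mean-zero random-walk recurrence yields $\limsup_k Z_{\tau_k}=+\infty$.
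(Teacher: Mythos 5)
Your construction of $Z_n=n-r(\mbH)A_n(T)$ via the bound $M_n(\maH)\le A_n(T)$, and your treatment of the strict case $\mu(T)<1/r(\mbH)$, are correct and coincide with the paper's proof. The gap is in the boundary case $\mu(T)=1/r(\mbH)$, and it sits exactly at the step you flagged as ``to be justified'': the excursion increments $\xi_k=Z_{\tau_k}-Z_{\tau_{k-1}}$ are \emph{not} non-degenerate --- under your own standing hypothesis of positive recurrence they are almost surely equal to zero, so no contradiction can be extracted from oscillation of the embedded walk. Indeed, over a complete excursion (empty buffer to empty buffer) every item that arrives is matched within the excursion; each matching consumes at most $r(\mbH)$ items, of which at least one has class in $T$ because $T$ is a transversal. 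Writing $L_k$ for the excursion length, $M_k$ for the number of matchings performed in it and $N_k$ for the number of $T$-arrivals in it, this gives $L_k \le r(\mbH)M_k \le r(\mbH)N_k$, i.e. $\xi_k=L_k-r(\mbH)N_k\le 0$ \emph{deterministically}. Combined with Wald's identity ($\mathbb E[\xi_k]=0$), this forces $\xi_k=0$ a.s., so the embedded walk is constant: $Z_{\tau_k}=Z_{\tau_0}\le 0$ for every $k$, which is perfectly compatible with your bound $Z_{\tau_k}\le 0$. The dichotomy ``non-degenerate mean-zero walks oscillate'' lands in its degenerate horn, and your contradiction evaporates.

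The missing idea is to derive the absurdity from the degeneracy itself, going back to the \emph{per-step} walk $Z$, whose increments (taking the values $1$ and $1-r(\mbH)$, each with positive probability since $0<\mu(T)<1$) genuinely are non-degenerate. From $\xi_1=0$ a.s. you get that $Z$ returns to the level $Z_{\tau_0}$ no later than time $\tau_1$; by the strong Markov property at $\tau_0$ and positive recurrence of $\suite{W_n}$, the first return time of the random walk $Z$ to its starting point would then have finite expectation. But a random walk on $\Z$ with non-degenerate, mean-zero, bounded increments cannot be positive recurrent (by translation invariance its only invariant measure up to scaling is the counting measure, which is infinite; equivalently, it is at best null recurrent), so its expected return time to any state is infinite --- the desired contradiction. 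This is precisely what the paper's phrase ``the sequence would be positive recurrent, an absurdity'' (in the proof of Proposition \ref{prop:Ncond1}, which the proof of Proposition \ref{prop:Ncond2} invokes) amounts to. In short: your proof needs to distinguish per-step non-degeneracy of $Z$ (true, and the usable fact) from excursion-level non-degeneracy (false), and the final contradiction must be routed through the former.
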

\begin{proof}
Suppose that there exists a transversal $T\in\maT(\mbH)$ such that $\mu(T)\;\le{1\over r(\mbH)}$. 
It is then easily seen that $M_n(\maH)\leq A_n(T)$  for all $n$. Thus, for all $n$ we have that 
		\begin{equation*}
		{X_n(V)\over n}\ge {1\over n}\left(A_n(V)-r(\mbH)M_n(\maH)\right)\ge {1\over n}\left(A_n(V)-r(\mbH)A_n(T)\right).
		\end{equation*}
		Taking $n$ to infinity in the above yields 
		\begin{equation*}
		\limsup_n {X_n(V)\over n} \ge 1 - r(\mbH)\mu(T),
		\end{equation*}
and we conclude as in the previous proof. 
	\end{proof}
	
\begin{remark}
\label{rem:unifsubN2}
\rm
As an immediate consequence of Proposition \ref{prop:Ncond2}, if $\mbH=(V,\maH)$ is of {order} $q$, 
and such that $\tau(\mbH) \le {q \over r(\mbH)}$, 
then $\textsc{Stab}(\mbH,\Phi)$ does not contain the uniform measure $\mu_{\textsc{u}}=(1/q,...,1/q)$ on $V$, in other words the model $(\mbH,\Phi,\mu_{\textsc{u}})$ is instable 
for any $\Phi$. Indeed, for any minimal transversal $T$ of $\mbH$ we have that 
\[\mu_{\textsc{u}}(T) = {\tau(\mbH) \over q} \le {1\over r(\mbH)}.\]
\end{remark}

\medskip

We now introduce two necessary conditions of stability based on the anti-rank of the considered hypergraph. 
We first introduce the following sets of measures, 
\begin{align}
\label{eq:Ncond3+}
 \mathscr N^{\scriptsize{+}}_3(\mathbb H) &= \left\{\mu \in \mathscr M(\maV):\;\forall i \in \maV,\,\mu(i) \le {1 \over a(\mbH)}\right\};\\
\label{eq:Ncond3-}
 \mathscr N^{-}_3(\mathbb H)&= \left\{\mu \in \mathscr M(\maV):\;\forall i \in \maV,\,\mu(i) < {1 \over a(\mbH)}\right\}. 
\end{align}
We have the following, 
\begin{proposition}
\label{prop:Ncond3}
For any connected hypergraph $\mathbb H=(\maV,\maH)$ and any admissible policy $\Phi$, 
\begin{equation}
\label{eq:Ncondantirank}
\textsc{Stab}(\mathbb H,\Phi) \subset \mathscr N^{\scriptsize{+}}_3(\mathbb H). 
\end{equation}
If the hypergraph $\mathbb H=(\maV,\maH)$ is $r$-{uniform} (i.e. $a(\mbH)=r(\mbH)=r$) we have that 
\begin{equation}
\label{eq:Ncondkunifcomplet}
\textsc{Stab}(\mathbb H,\Phi) \subset \mathscr N^{-}_3(\mathbb H).
\end{equation}
in other words the model $(\mbH,\Phi,\mu)$ cannot be stable unless $\mu(i)<1/r$ for any $i\in V$. 
\end{proposition}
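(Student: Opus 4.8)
The plan is to exploit the fundamental balance relation \eqref{eq:base} specialized to singletons $B=\{i\}$, together with a counting argument on matchings. The key observation is that the anti-rank $a(\mbH)$ controls the maximal number of matchings that can consume a single class of item per unit time. First I would fix a node $i \in \maV$ and write the balance equation for the singleton $\{i\}$:
\[
X_n(i) = A_n(i) - \sum_{H \in \maH} \left|H \cap \{i\}\right| M_n(H) = A_n(i) - \sum_{H \in \maH(i)} M_n(H) \ge 0,
\]
using that $|H \cap \{i\}| = \ind_{H}(i)$, so only the hyperedges containing $i$ contribute. This already gives $\sum_{H \in \maH(i)} M_n(H) \le A_n(i)$ for all $n$.

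Next I would bound the total number of matchings from below in terms of the arrivals at $i$. The idea is that every single matching consumes at least $a(\mbH)$ items in total (since every hyperedge has size at least $a(\mbH)$), so a matching that does \emph{not} contain $i$ still consumes $a(\mbH)$ items from classes other than $i$. To make this precise for the proof of \eqref{eq:Ncondantirank}, I would compare the arrival stream at $i$ against the rate at which matchings can absorb these arrivals. Concretely, since each hyperedge $H \in \maH(i)$ has $|H| \ge a(\mbH)$, completing one such matching removes the item of class $i$ plus at least $a(\mbH)-1$ others; rephrasing, to consume all arrivals of class $i$ in the long run, the matching rate of hyperedges touching $i$ must keep pace with $A_n(i)$. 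The cleanest route is to assume for contradiction that $\mu(i) > 1/a(\mbH)$ and derive transience: I would show
\[
\frac{X_n(V)}{n} \ge \frac{1}{n}\left(a(\mbH)\, A_n(i) - a(\mbH)\, M_n(\maH)\cdot\tfrac{1}{1}\right)
\]
is not quite the right grouping, so instead I would directly argue that the items of class $i$ alone force $\limsup_n X_n(i)/n \ge \mu(i) - 1/a(\mbH) > 0$ by bounding $\sum_{H \in \maH(i)} M_n(H)$ above using $M_n(\maH) \le A_n(V)/a(\mbH)$ and $A_n(V) \le n$, hence $\sum_{H} M_n(H) \le n/a(\mbH)$. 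Taking $n \to \infty$ via the SLLN ($A_n(i)/n \to \mu(i)$) yields $\limsup_n X_n(i)/n \ge \mu(i) - 1/a(\mbH) > 0$, which forces $X_n(i) \to \infty$ a.s.\ and thus transience of $\suite{W_n}$, exactly as in the proof of Proposition \ref{prop:Ncond2}. This establishes the inclusion in $\mathscr N_3^{+}(\mbH)$.

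For the strict version \eqref{eq:Ncondkunifcomplet} under $r$-uniformity, the plan mirrors the boundary-case arguments already used in Proposition \ref{prop:Ncond1}: I would treat the equality case $\mu(i) = 1/r$ separately. Here $a(\mbH)=r(\mbH)=r$, so every matching consumes exactly $r$ items and the bound $M_n(\maH) \le n/r$ becomes tight asymptotically. In the equality case I would introduce the centered process
\[
Y_n = A_n(i) - \sum_{H \in \maH(i)} M_n(H),
\]
which equals $X_n(i)$ and is therefore nonnegative, yet whose natural comparison random walk has zero drift when $\mu(i) = 1/r$; arguing as in the null-recurrence step of Proposition \ref{prop:Ncond1}, positive recurrence of $\suite{W_n}$ would force a zero-drift, genuinely fluctuating random walk to be positive recurrent, a contradiction. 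The main obstacle I anticipate is making the counting bound on $M_n(\maH)$ simultaneously sharp and correctly coupled to $A_n(i)$ in the $r$-uniform case: one must verify that the down-jumps of the comparison walk are genuinely bounded below by a positive constant infinitely often (so the walk is not identically zero), which relies on the fact that $i$ belongs to at least one hyperedge and that arrivals at $i$ occur with positive frequency since $\mu \in \mathscr M(\maV)$ has full support.
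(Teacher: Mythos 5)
Your proof is correct, but for the main inclusion \eqref{eq:Ncondantirank} it takes a genuinely different, and more elementary, route than the paper. The paper argues by contradiction via a parametrized construction: it uses the fact that $x \mapsto (r(\mbH)-a(\mbH)+x)/(xa(\mbH))$ decreases strictly to $1/a(\mbH)$ to pick an auxiliary $x_0>0$, applies \eqref{eq:base} to both $\maV\setminus\{i_0\}$ and $\{i_0\}$, and extracts a linear combination $X_n(\maV)+\bigl(x_0-(r(\mbH)+x_0)/a(\mbH)\bigr)X_n(i_0)$ whose normalized limsup is positive, yielding transience. You bypass all of this with the single global bound $a(\mbH)M_n(\maH)\le \sum_{H\in\maH}|H|\,M_n(H)\le A_n(\maV)=n$ (each matching consumes at least $a(\mbH)$ items), which combined with \eqref{eq:base} at the singleton $\{i\}$ gives the pathwise inequality $X_n(i)\ge A_n(i)-n/a(\mbH)$; the SLLN then yields $\liminf_n X_n(i)/n\ge \mu(i)-1/a(\mbH)>0$ — note that your bound actually controls the liminf, not merely the limsup, which is precisely what lets you conclude $X_n(i)\to\infty$ a.s. and hence transience. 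This is shorter and isolates the real mechanism. For the $r$-uniform boundary case $\mu(i)=1/r$, the two proofs essentially converge: substituting $a(\mbH)=r(\mbH)=r$ into the paper's inequality preceding \eqref{eq:boundN3}, its right-hand side collapses to $x_0\bigl(A_n(i_0)-n/r\bigr)$, i.e.\ a scalar multiple of your comparison walk $A_n(i)-n/r$, and both proofs then defer to the null-recurrence step of Proposition \ref{prop:Ncond1}: a zero-drift, non-degenerate walk which is forced to lie in $(-\infty,0]$ at every return time of $X$ to $\mathbf 0$ is incompatible with those return times having finite mean. One small imprecision at the end of your argument: the non-degeneracy you set out to verify is automatic for the walk $A_n(i)-n/r$, whose increments take the nonzero values $1-1/r$ and $-1/r$ with probabilities $\mu(i)$ and $1-\mu(i)$, both positive since $\mu$ has full support; it does not depend on $i$ belonging to a hyperedge.
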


\begin{proof}
To prove the first statement, we argue again by contradiction. Suppose that 
$\mu(i_0) > {1 \over a(\mbH)}$ for some node $i_0$. 
As the function 
\[
\begin{cases}
\R_+ &\longrightarrow \R_+\\
x &\longmapsto {r(\mbH)-a(\mbH)+x \over xa(\mbH)}
\end{cases}\]
strictly decreases to ${1\over a}$, there exists $x_0 > 0$ such that 
\begin{equation}
\label{eq:i0}
\mu(i_0) >  {r(\mbH)-a(\mbH)+x_0 \over x_0a(\mbH)}.
\end{equation}
Then, applying the right inequality in (\ref{eq:base}) to $B \equiv \maV \setminus \{i_0\}$, we readily obtain that a.s. for all $n$, 
\begin{multline}
{r(\mbH)+x_0\over a(\mbH)}A_n\left(\maV\setminus\{i_0\}\right)\\
\begin{aligned}
&\ge {r(\mbH)+x_0\over a(\mbH)}\left(\sum\limits_{H \in \maH(i_0)} \left|H -1\right| M_n\left(H\right)+\sum\limits_{H \in \overline{\maH(i_0)}} \left|H \right| M_n\left(H\right)\right)\\
&\ge \left(r(\mbH)+x_0-{r(\mbH)+x_0\over a(\mbH)}\right)M_n\left(\maH(i_0)\right) + (r(\mbH)+x_0)M_n\left(\overline{\maH(i_0)}\right). \label{eq:conditionM}
\end{aligned}
\end{multline}
Likewise, applying the left equality of (\ref{eq:base}) to $\{i_0\}$ and then $\maV\setminus \{i_0\}$ also yields to 
\begin{multline*}
X_n\left(\maV\setminus \{i_0\}\right)+\left(x_0 +1 - {r(\mbH)+x_0 \over a(\mbH)}\right)X_{n}(i_0)\\
\shoveleft{
= A_n\left(\maV\setminus \{i_0\}\right)-\sum\limits_{H \in \maH(i_0)} \left| H -1\right| M_n\left(H\right)-\sum\limits_{H \in \overline{\maH(i_0)}} \left| H\right| M_n\left(H\right)}\\
\shoveright{+\left(x_0 +1- {r(\mbH)+x_0 \over a(\mbH)}\right)\left(A_n(i_0)-M_n\left(\maH(i_0)\right)\right)}\\
\shoveleft{> A_n\left(\maV\setminus \{i_0\}\right)+\left(x_0 +1- {r(\mbH)+x_0 \over a(\mbH)}\right)A_n(i_0)}\\
-\left(r(\mbH)+ x_0 - {r(\mbH)+x_0 \over a(\mbH)}\right)M_n(\maH(i_0)) - (r(\mbH)+x_0)M_n\left(\overline{\maH(i_0)}\right).
\end{multline*}
Combining this with (\ref{eq:conditionM}), implies that a.s. for all $n$, 
\begin{equation*}
X_n\left(\maV\right)+\left(x_0 - {r(\mbH)+x_0 \over a(\mbH)}\right)X_{n}(i_0) 
>  \left(1- {r(\mbH)+x_0 \over a(\mbH)}\right)A_n\left(\maV\right)+x_0A_n(i_0).
\end{equation*}
Therefore we have that 
\begin{equation}
\label{eq:boundN3}
\limsup_n {1\over n}\left(X_n\left(\maV\right)+\left(x_0 - {r(\mbH)+x_0 \over a(\mbH)}\right)X_{n}(i_0)\right) \ge 
 1- {r(\mbH)+x_0 \over a(\mbH)}+x_0\mu(i_0),
 \end{equation}
hence the chain $(W_n)$ is transient since the right-hand side of the above is positive from (\ref{eq:i0}).  

\medskip
It remains to check that in the case where the hypergraph is $r$-uniform, the model cannot be stable whenever 
$\mu(i_0) \ge {1\over a(\mbH)}={1\over r}$ 
for some $i_0\in V$. For this, notice that, as $r(\mbH)=a(\mbH)=r$ a weak inequality holds true in (\ref{eq:i0}) for any $x_0>0$. 
Then, it readily follows from (\ref{eq:boundN3}) that for any $x_0$, 
$$\limsup_n {1\over n}\left(X_n\left(\maV\right)+\left(x_0 - {r(\mbH)+x_0 \over a(\mbH)}\right)X_{n}(i_0)\right) \ge 0,$$
and we conclude, as in the proof of Proposition \ref{prop:Ncond1}, that the chain $(W_n)$ is at best null recurrent. 
\end{proof}


\section{{Non-stabilizable hypergraphs}}
\label{sec:instable} 
Having Propositions \ref{prop:Ncond1}, \ref{prop:Ncond2} and \ref{prop:Ncond3} in hand, one can identify classes of hypergraphs 
$\mbH$ such that $(\mbH,\Phi,\mu)$ has an empty stability region for any admissible $\Phi$. 

We start with the following elementary observation, 
\begin{proposition}
	\label{prop:isolated}
	If an hyperedge of $\mbH=(V,\maH)$ contains two isolated nodes, i.e. there exist $H\in \maH$ and $i,j \in H$ such that $d(i)=d(j)=1$, then 
	the model cannot be stable, i.e. $\textsc{Stab} (\mbH,\Phi)=\emptyset$ for any admissible $\Phi$ . 
\end{proposition}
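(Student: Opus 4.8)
The plan is to exploit that a node of degree $1$ can be consumed only through matchings of the unique hyperedge containing it, which rigidly couples the contents of the two isolated nodes. Let $i,j\in H$ satisfy $d(i)=d(j)=1$, so that $i$ (resp.\ $j$) belongs to no hyperedge other than $H$, i.e.\ $\maH(i)=\maH(j)=\{H\}$. Applying the balance relation (\ref{eq:base}) to the singletons $B=\{i\}$ and $B=\{j\}$, and using that $|H'\cap\{i\}|=|H'\cap\{j\}|=\ind_{\{H'=H\}}$ for every $H'\in\maH$, I would first obtain
\[
X_n(i)=A_n(i)-M_n(H)\quad\text{and}\quad X_n(j)=A_n(j)-M_n(H),\qquad n\in\N,
\]
and hence, by subtraction, the exact identity
\[
X_n(i)-X_n(j)=A_n(i)-A_n(j)=:Z_n,\qquad n\in\N .
\]
The process $\suite{Z_n}$ is a random walk on $\Z$ with i.i.d.\ increments $\ind_{\{V_k=i\}}-\ind_{\{V_k=j\}}\in\{-1,0,1\}$, whose up- and down-steps occur with respective probabilities $\mu(i)$ and $\mu(j)$, both positive since $\mu\in\mathscr M(\maV)$ has full support.

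I would then split according to the drift $\mu(i)-\mu(j)$ of this walk. If $\mu(i)\ne\mu(j)$, the SLLN gives $|Z_n|\to\infty$ a.s.; since $X_n(i)=X_n(j)+Z_n\ge Z_n$ and $X_n(j)=X_n(i)-Z_n\ge -Z_n$ with $X_n(i),X_n(j)\ge0$, one of the two coordinates tends a.s.\ to $+\infty$. Thus $\parallel X_n\parallel\to\infty$ a.s., so $\suite{X_n}$ visits $\mathbf 0$ only finitely often and $\suite{W_n}$ is transient. If instead $\mu(i)=\mu(j)$, the walk $\suite{Z_n}$ is a non-degenerate mean-zero random walk on $\Z$, hence null recurrent. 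Arguing exactly as for the inclusion in $\mathscr N_1^{-}(\mathbb H)$ in the proof of Proposition \ref{prop:Ncond1}: were $\suite{W_n}$ positive recurrent, the chain $\suite{X_n}$ would return to $\mathbf 0$ infinitely often with finite expected inter-passage time, along a sequence of stopping times $\tau_1<\tau_2<\cdots$ at each of which $Z_{\tau_k}=0$. As the increments of $Z$ after $\tau_k$ are independent of the past and distributed as those of $Z$ itself, the strong Markov property shows that the first return time of $Z$ to $0$ after $\tau_k$ is at most $\tau_{k+1}-\tau_k$, which has finite expectation; this would make $\suite{Z_n}$ positive recurrent, contradicting its null recurrence.

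In either case $\suite{W_n}$ fails to be positive recurrent for every $\mu\in\mathscr M(\maV)$ and every admissible $\Phi$, whence $\textsc{Stab}(\mbH,\Phi)=\emptyset$. I expect the only delicate point to be the driftless case: one must check that the successive return times of $\suite{X_n}$ to $\mathbf 0$ are genuine stopping times for the filtration generated by $\suite{V_n}$ (together with the policy's internal randomization), so that the regeneration of $\suite{Z_n}$ at these instants is legitimate — precisely the mechanism already invoked in Proposition \ref{prop:Ncond1}. Everything else reduces to the elementary recurrence classification of a $\{-1,0,1\}$-valued random walk.
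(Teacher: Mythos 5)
Your proof is correct, and it reaches the conclusion by a more self-contained route than the paper. The paper disposes of Proposition \ref{prop:isolated} in two lines by invoking the already-established necessary condition of Proposition \ref{prop:Ncond1}: any stable $\mu$ must lie in $\mathscr N^{--}_1(\mbH)$, and taking $B=\{i\}$ then $B=\{j\}$ (each singleton meets only $H$, and the other isolated node lies in $H\cap \bar B$) forces the contradictory strict inequalities $\mu(i)<\mu(j)$ and $\mu(j)<\mu(i)$; hence $\mathscr N^{--}_1(\mbH)=\emptyset$ and so is $\textsc{Stab}(\mbH,\Phi)$. You instead re-derive the relevant special case of that proposition from scratch: using $d(i)=d(j)=1$ you get from (\ref{eq:base}) the \emph{exact} identities $X_n(i)=A_n(i)-M_n(H)$ and $X_n(j)=A_n(j)-M_n(H)$, hence the exact coupling $X_n(i)-X_n(j)=A_n(i)-A_n(j)$, and you then run the standard dichotomy (transience when $\mu(i)\neq\mu(j)$ by the SLLN; contradiction with null recurrence of the driftless walk when $\mu(i)=\mu(j)$), which is precisely the mechanism inside the proof of Proposition \ref{prop:Ncond1}. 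What the paper's route buys is brevity and reuse of machinery; what yours buys is independence from Proposition \ref{prop:Ncond1}, an exact identity in place of the inequality comparison used there, and an explicit treatment of the regeneration/stopping-time point (that the return times of $\suite{X_n}$ to $\mathbf 0$ are stopping times after which the arrival stream regenerates) which the paper leaves implicit. The only cost is duplication: once Proposition \ref{prop:Ncond1} is available, your two-case analysis is subsumed by the one-line membership test in $\mathscr N^{--}_1(\mbH)$.
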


\begin{proof}
	Let $\mu \in  \mathscr N^{\tiny{--}}_1(\mbH)$. Then, considering successively the sets $\{i\}$ and $\{j\}$, 
	as $j \in H\cap \bar{\{i\}}$ and $i \in H \cap \bar{\{j\}}$ we obtain that $\mu(i) < \mu(j)\mbox{ and }\mu(i) > \mu(j),$ an absurdity. 
\end{proof}

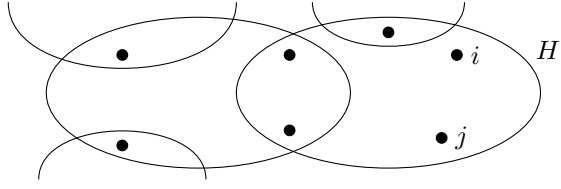
\begin{figure}[htp]
	\def\firstellip{(1, 2) ellipse [x radius=2cm, y radius=1cm, rotate=180]}
	\def\secondellip{(3.5, 2cm) ellipse [x radius=2cm, y radius=1cm, rotate=180]} 
	\def\thirdellip{(-1.5, 2) ellipse [x radius=2cm, y radius=1cm, rotate=-180]}
	\begin{tikzpicture}
	
	
	\filldraw (-2.5,1.3) circle (2pt) node [right] {};
	\draw[-] (-3.6,0.85) to[bend left=90] (-1.4,0.85);
	\filldraw (-2.5,2.5) circle (2pt) node [right] {};
	\draw[-] (-4,3.2) to[bend right=90] (-1,3.2);
	\filldraw (-0.3,2.5) circle (2pt) node [right] {};
	\filldraw (-0.3,1.5) circle (2pt) node [right] {};
	\filldraw (1.9,2.5) circle (2pt) node [right] {$\,i$};
	\draw[-] (0,3.2) to[bend right=90] (2,3.2);
	\filldraw (1,2.8) circle (2pt) node [right] {};
	\filldraw (1.7,1.4) circle (2pt) node [right] {\,$j$};
	\draw \firstellip node [label={[xshift=2.1cm, yshift=0.2cm]$H$}] {};
	\draw \thirdellip node [label={[xshift=-2.0cm, yshift=-0.8cm]}] {};
	\end{tikzpicture} 
	\caption{\label{fig:Ex0} Any hypergraph with two isolated nodes is non-stabilizable.}
\end{figure}

\subsection{Stars}
\label{subsec:starcycles}
First recall that, as for any bipartite graph (see Theorem 2 in \cite{MaiMoy16}), graphical matching models on 
trees are always instable. This is true in particular if the matching graph is a ``star'', i.e., a connected graph in which all but one vertices are of degree one. 
The following two results can be seen as generalizations of this fact to hypergraphical models, 

\begin{proposition}
\label{prop:superstar}
If {an} $r$-uniform hypergraph $\mbH=(V,\maH)$ has transversal number $\tau(\mbH)=1$, then it is non-stabilizable. 
\end{proposition}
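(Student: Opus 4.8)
The plan is to obtain a contradiction by confronting the two necessary conditions of stability already established in Propositions~\ref{prop:Ncond2} and \ref{prop:Ncond3}. The only structural input needed is a reading of the hypothesis: by Definition~\ref{def:transverse}, $\tau(\mbH)=1$ means that $\mbH$ admits a transversal reduced to a single node, i.e. there exists a node $i_0 \in V$ belonging to \emph{every} hyperedge of $\maH$. In particular $T:=\{i_0\}$ is a transversal, so $T \in \maT(\mbH)$.

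First I would invoke Proposition~\ref{prop:Ncond2}. Since $\textsc{Stab}(\mbH,\Phi) \subset \mathscr N_2(\mathbb H)$, any measure $\mu$ that stabilizes the model must satisfy $\mu(T) > 1/r(\mbH)$ for every transversal $T$; applying this to $T=\{i_0\}$ gives the lower bound $\mu(i_0) > 1/r(\mbH) = 1/r$.

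On the other hand, because $\mbH$ is $r$-uniform (so that $a(\mbH)=r(\mbH)=r$), the inclusion (\ref{eq:Ncondkunifcomplet}) of Proposition~\ref{prop:Ncond3} forces any stabilizing measure to lie in $\mathscr N^{-}_3(\mathbb H)$, whence $\mu(i) < 1/r$ for every node $i\in\maV$, and in particular the upper bound $\mu(i_0) < 1/r$.

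These two requirements on $\mu(i_0)$ are mutually exclusive, so no measure $\mu \in \mathscr M(V)$ can stabilize the model under any admissible policy $\Phi$; that is, $\textsc{Stab}(\mbH,\Phi) = \emptyset$, which is exactly non-stabilizability. I do not expect any genuine obstacle here: the argument reduces to the single observation that a transversal number equal to one creates a node whose mass is simultaneously constrained to exceed and to fall below $1/r$. The only point deserving a word of care is the translation of $\tau(\mbH)=1$ into the existence of a universal node $i_0$, which is immediate from the definition of a transversal.
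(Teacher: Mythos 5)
Your proof is correct and is essentially identical to the paper's: both arguments extract the universal node $i_0$ from $\tau(\mbH)=1$, then combine Proposition \ref{prop:Ncond2} (giving $\mu(i_0)>1/r(\mbH)=1/r$) with the $r$-uniform case of Proposition \ref{prop:Ncond3} (giving $\mu(i_0)<1/a(\mbH)=1/r$) to reach the same contradiction. No gaps; nothing further to add.
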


\begin{proof}
Fix $\Phi$ and $\mu$ in $\textsc{Stab} (\mbH,\Phi)$. Let $T$ be a transversal of cardinality $1$, i.e. $T=\{i_0\}$, where the vertex $i_0$ belongs 
to all hyperedges in $\maH$. Then from Proposition \ref{prop:Ncond3}, we have that $\mu(i_0) < 1/a(\mbH) = 1/r$.  
However, Proposition \ref{prop:Ncond2} implies that $\mu(\{i_0\})>1/r(\mbH)=1/r,$ an absurdity. 
\end{proof}

In other words, any uniform hypergraph whose hyperedges all contain the same node $i_0$ cannot make the corresponding system stable. Moreover, 

\begin{proposition} \label{prop:IpHi}
Suppose that there exists a subset $B \subset V$ in the hypergraph $\mbH=(V,\maH)$ such that: 
\begin{itemize}
\item all hyperedges of $\maH(B)$ contain at least one node of degree 1; 
\item at least one of these nodes of degree 1 lies outside of $B$.
\end{itemize} 
Then $\mbH$ is non-stabilizable. 
\end{proposition}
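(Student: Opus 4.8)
The plan is to argue by contradiction, in the exact spirit of the proof of Proposition~\ref{prop:Ncond1}, by extracting from the dynamics a one-dimensional random walk driven purely by the arrivals that can be neither transient-free nor positive recurrent. Fix an admissible policy $\Phi$ and suppose, for contradiction, that some $\mu$ makes $(\mbH,\Phi,\mu)$ stable, so that $\suite{X_n}$ is positive recurrent. By the first hypothesis, to every hyperedge $H\in\maH(B)$ I can assign a node $i_H\in H$ with $d(i_H)=1$; since a node of degree $1$ lies in a single hyperedge, $\maH(i_H)=\{H\}$, and the key relation~(\ref{eq:base}) applied to the singleton $\{i_H\}$ reads $X_n(i_H)=A_n(i_H)-M_n(H)$, i.e. $M_n(H)=A_n(i_H)-X_n(i_H)$.

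First I would substitute these identities into (\ref{eq:base}) written for $B$. Since $|H\cap B|=0$ whenever $H\notin\maH(B)$, this yields the exact identity
\[
X_n(B)-\sum_{H\in\maH(B)}|H\cap B|\,X_n(i_H)=A_n(B)-\sum_{H\in\maH(B)}|H\cap B|\,A_n(i_H)=:Z_n,\qquad n\in\N .
\]
The crucial point is that $Z_n$ depends on the trajectory only through the i.i.d. arrival sequence $\suite{V_n}$, hence is a random walk on $\Z$ with bounded increments and drift $\delta:=\mu(B)-\sum_{H\in\maH(B)}|H\cap B|\,\mu(i_H)$. If $\delta>0$, then $X_n(B)\ge Z_n\to+\infty$ a.s. by the SLLN (recall $X_n(i_H)\ge0$), so $\suite{W_n}$ is transient; if $\delta<0$, then $\sum_{H\in\maH(B)}|H\cap B|\,X_n(i_H)=X_n(B)-Z_n\ge -Z_n\to+\infty$, forcing $X_n(i_H)\to\infty$ for at least one $H$, again contradicting positive recurrence.

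The delicate case, which I expect to be the main obstacle, is $\delta=0$, and this is exactly where I would invoke the second hypothesis. There is a degree-$1$ node $i_0\notin B$ lying in some $H_0\in\maH(B)$, so the increment of $Z_n$ upon an arrival of class $i_0$ equals $-|H_0\cap B|\le -1$, an event of probability $\mu(i_0)>0$; hence $Z_n$ is a genuinely non-degenerate, centred random walk on $\Z$, therefore null recurrent. (Condition~2 is essential here: if every $i_H$ lay in $B$ with $|H\cap B|=1$, the walk $Z_n$ could be identically $0$ and the argument would break down, consistently with the fact that such hypergraphs may well be stabilizable.) I would then close the contradiction precisely as in Proposition~\ref{prop:Ncond1}: were $\suite{W_n}$ positive recurrent, $\suite{X_n}$ would visit $\mathbf 0$ infinitely often with integrable return times, and at each such visit the displayed identity gives $Z_n=0$; thus $Z_n$ would return to $0$ with finite mean return time, i.e. be positive recurrent, contradicting its null recurrence. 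Since only~(\ref{eq:base}) and the degree-$1$ identities were used, $\Phi$ and $\mu$ were arbitrary, whence $\textsc{Stab}(\mbH,\Phi)=\emptyset$ for every admissible $\Phi$.
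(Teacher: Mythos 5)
Your proof is correct and takes essentially the same route as the paper's: you exploit the identity $X_n(i_H)=A_n(i_H)-M_n(H)$ for degree-one nodes, substitute into (\ref{eq:base}) to obtain the paper's arrival-driven walk $\tilde Y_n$ (your $Z_n$), use the node of degree one outside $B$ for non-degeneracy in the zero-drift case, and close with the same null-recurrence contradiction as in Proposition \ref{prop:Ncond1}. The only cosmetic difference is that you dispose of the case $\delta>0$ directly via $X_n(B)\ge Z_n\to\infty$, whereas the paper handles it by noting that such a $\mu$ fails to belong to $\mathscr N^{+}_1(\mathbb H)$ and is therefore already excluded by Proposition \ref{prop:Ncond1}.
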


\begin{proof}
Let $k=|\maH(B)|$, i.e. the number of hyperedges intersecting with $B$. Denote by $H_{1},...,H_{k}$ these intersecting 
hyperedges, and for any $l \in\llbracket 1,k \rrbracket$, by $i_l\in V$, a node of degree one belonging to $H_{l}$. 
Observe that the nodes $i_1,...,i_k$ are not necessarily distinct. On the one hand, for 
any $l\in\llbracket 1,k \rrbracket$ we have that 
\begin{equation*}
X_n(i_l)=A_n(i_l)-M_n(H_{l}).
\end{equation*}
Thus, applying again the right inequality in (\ref{eq:base}) we get that for all $n$, 
\begin{equation*}A_n(B)\geq \sum\limits_{l=1}^k |H_{l}\cap B| M_n(H_l)
={1\over n}\sum\limits_{l=1}^{k}|H_l\cap B|(A_n(i_l)-X_n(i_l)).\end{equation*}
This entails that if $\mu \in \mathscr N_1^{\scriptsize{+}}(\mathbb{H})$, 
\begin{equation*}
\limsup\limits_{n\to\infty}{1 \over n}  \sum\limits_{l=1}^{k}|H_l\cap B|X_n(i_l)\geq\sum\limits_{l=1}^{k}|H_l\cap B|\mu(i_l)-\mu(B) \ge 0.
\end{equation*}
If the above inequality is strong, then the chain $\{W_n\}$ is transient. If the inequality is weak, then as above we can stochastically lower-bound the chain by a zero-drift chain $\{\tilde Y_n\}$, defined by 
 \begin{equation*}
\tilde Y_n =\left(A_n(B)\;-\;\sum\limits_{l=1}^{k}|H_l\cap B|A_n(i_l)\right),\quad n\in\N,
\end{equation*}
which is not identically null from the assumption that at least one of the nodes $i_l$, $l=1,...,k$ is not an element of $B$. This concludes the proof. 
\end{proof}

\begin{ex}\label{ex:chaine}
\rm
Any hypergraph $\mbH=(V,\maH)$ such that there exist two hyperedges $H_1$ and $H_2$ 
with $H_1 \cap H_2 \ne \emptyset$ and two nodes $i_1 \in H_1 \cap \overline{H_2}$, $i_2 \in H_2 \cap \overline{H_1}$ and 
$d\left(i_1\right)=d\left(i_2\right)=1$ is non-stabilizable (see Figure \ref{fig:Ex2}). 
To see this, take $B = H_1 \cap H_2$ in Proposition \ref{prop:IpHi}. 
\begin{figure}[htp]
	\def\firstellip{(1, 2) ellipse [x radius=2cm, y radius=1cm, rotate=180]}
	\def\secondellip{(3.5, 2cm) ellipse [x radius=2cm, y radius=1cm, rotate=180]} 
	\def\thirdellip{(-1.5, 2) ellipse [x radius=2cm, y radius=1cm, rotate=-180]}
	\begin{tikzpicture}
	
	
	\filldraw (-2.5,1.5) circle (2pt) node [right] {$\;i_1$};
	\filldraw (-2.5,2.5) circle (2pt) node [right] {};
	\draw[-] (-4,3.2) to[bend right=90] (-1,3.2);
	\filldraw (-0.3,2.5) circle (2pt) node [right] {};
	\filldraw (-0.3,1.5) circle (2pt) node [right] {};
	\filldraw (1.9,2.5) circle (2pt) node [right] {$\,i_2$};
	\draw[-] (0,3.2) to[bend right=90] (2,3.2);
	\filldraw (1,2.8) circle (2pt) node [right] {};
	\filldraw (1.9,1.25) circle (2pt) node [right] {};
	\draw[-] (1,0.85) to[bend left=90] (3,0.85);
	\draw \firstellip node [label={[xshift=2.1cm, yshift=0.2cm]$H_2$}] {};
	\draw \thirdellip node [label={[xshift=-2.0cm, yshift=-1cm]$H_1$}] {};
	\end{tikzpicture} 
	\caption{\label{fig:Ex2} Two intersecting hyperedges containing each, an isolated node outside of their intersection, make the system instable.}
\end{figure}
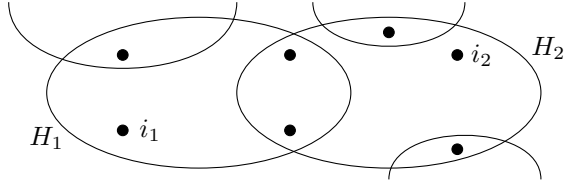

\end{ex}


\subsection{$k$-partite hypergraphs}
We now turn to hypergraphical generalizations of $k$-partite graphs. 

\begin{definition}
\rm An hypergraph $\mathbb{H}=(\maV,\maH)$ is said to be $k$-partite, where $k\ge a(\mbH)$, if there exists a partition $V_1, V_2,\cdots\,, V_k$ of $\maV$ such that every hyperedge in 
$\mbH$ meets each of the $V_i$'s at precisely one vertex, i.e. for any $H\in\maH$ and any $i\le k$, $\left|H\cap V_i\right|=1$. 
We some abuse, we say that a $r$-uniform hypergraph $\mbH$ is $r$-uniform bipartite, if there exists a partition $V_1,V_2$ of $V$ such that for any $H\in\maH$, $|H\cap V_1|=1$ and $|H\cap V_2|=r-1.$ (Notice that such hypergraph cannot be $2$-partite unless it is a graph.) 
\end{definition}

\begin{proposition}
	\label{prop:HbipartiteInstable}
	Any $r$-uniform bipartite hypergraph $\mathbb{H}=(\maV,\maH)$ is non-stabilizable. \end{proposition}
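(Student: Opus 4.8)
The plan is to combine the transversal-based necessary condition of Proposition~\ref{prop:Ncond2} with the balance identity~\eqref{eq:base}, forming out of the two sides $V_1,V_2$ of the partition a single linear combination of the class-contents whose increments are insensitive to the matching policy. I would then show this combination is a random walk with strictly positive drift, so that one side-content diverges, ruling out positive recurrence. Note that the naive extension of the bipartite-graph argument — observing that both $V_1$ and $V_2$ are transversals and invoking $\mu(V_1)>1/r$ together with $\mu(V_2)>1/r$ — fails for $r\ge 3$, since then $\mu(V_1)+\mu(V_2)=1>2/r$ is perfectly consistent; a sharper relation is needed.

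First I would record that $V_1$ is a transversal of $\mbH$: every hyperedge $H$ satisfies $|H\cap V_1|=1$, so $V_1$ meets every hyperedge and $V_1\in\maT(\mbH)$. Hence, by Proposition~\ref{prop:Ncond2}, any $\mu\in\textsc{Stab}(\mbH,\Phi)$ must satisfy $\mu(V_1)>1/r$.

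Next I would apply~\eqref{eq:base} to $B=V_1$ and to $B=V_2$. Using $|H\cap V_1|=1$ and $|H\cap V_2|=r-1$ for every $H\in\maH$, this gives, for all $n\in\N$,
\begin{equation*}
X_n(V_1)=A_n(V_1)-M_n(\maH),\qquad X_n(V_2)=A_n(V_2)-(r-1)M_n(\maH).
\end{equation*}
The decisive step is to form $Z_n:=(r-1)X_n(V_1)-X_n(V_2)$, in which the policy-dependent matching counts cancel exactly, leaving
\begin{equation*}
Z_n=(r-1)A_n(V_1)-A_n(V_2),\qquad n\in\N,
\end{equation*}
a random walk driven solely by the i.i.d.\ arrival sequence, independently of $\Phi$.

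Finally, since $V_1,V_2$ partition $\maV$ we have $\mu(V_2)=1-\mu(V_1)$, so the SLLN yields $Z_n/n\to(r-1)\mu(V_1)-\mu(V_2)=r\mu(V_1)-1$, which is strictly positive by the transversal bound of the second step. Thus $Z_n\to+\infty$ a.s., and since $Z_n\le(r-1)X_n(V_1)$ this forces $X_n(V_1)\to+\infty$. This contradicts positive recurrence of $\{W_n\}$ (equivalently $\{X_n\}$), which would require the chain to return to $\mathbf 0$ — where $X_n(V_1)=0$ — infinitely often. As $\mu$ and $\Phi$ were arbitrary, $\textsc{Stab}(\mbH,\Phi)=\emptyset$ for every admissible $\Phi$. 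I expect no serious obstacle; the one delicate point is precisely the cancellation of $M_n(\maH)$ in $Z_n$, which is what renders the argument valid for every matching policy at once.
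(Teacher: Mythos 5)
Your proof is correct and takes essentially the same route as the paper: both apply (\ref{eq:base}) to $V_1$ and $V_2$, eliminate the policy-dependent count $M_n(\maH)$ (you by exact cancellation in $Z_n=(r-1)X_n(V_1)-X_n(V_2)$, the paper by bounding $M_n(\maH)\le \frac{1}{r-1}A_n(V_2)$ via $X_n(V_2)\ge 0$, which is the same inequality up to the factor $r-1$), and then combine the SLLN with the transversal condition of Proposition~\ref{prop:Ncond2} applied to $V_1$. The only difference is the logical order — the paper derives $\mu(V_1)\le 1/r$ as a consequence of stability and then contradicts $\mathscr N_2(\mbH)$, whereas you assume the $\mathscr N_2$ bound first and deduce transience — so the two arguments are mathematically identical.
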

\begin{proof}
	Applying (\ref{eq:base}) successively to $V_1$ and $V_2$ readily implies that for all $n$, 
	\begin{equation*}
	X_n(V_1) =A_n(V_1)-\mathcal{M}_n(\maH)\ge 0 \quad \mbox{ and }\quad
	X_n(V_2) = A_n(V_2)-(r-1)\mathcal{M}_n(\maH)\geq 0,
	\end{equation*} 
	and thus \[X_n(V_1) \geq A_n(V_1)-\displaystyle\frac{1}{r-1}A_n(V_2).\]
	Then, the usual SLLN-based argument implies that the model cannot be stable unless $\mu(V_2) \geq (r-1)\mu(V_1)$. 
	But as $\mu(V_1)+\mu(V_2)=1$ we have that $\mu(V_1)\leq\displaystyle\frac{1}{r}$, 
	hence $\mu \not\in \mathscr N_2(\mbH)$ since $V_1$ is a transversal. 
\end{proof}


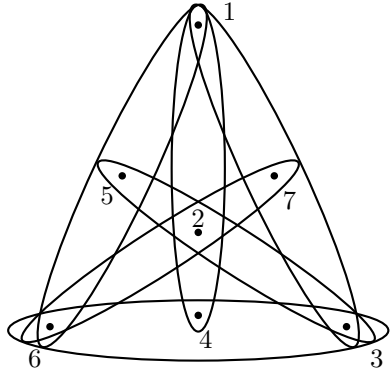
\begin{figure}[htp!]
	\def\firstellip{(4, 3.9) ellipse [x radius=5cm, y radius=0.8cm, rotate=180]}
	\def\secondellip{(2,8) ellipse [x radius=5cm, y radius=0.77cm, rotate=65]} 
	\def\thirdellip{(4, 8.2) ellipse [x radius=4.33cm, y radius=0.7cm, rotate=90]} 
	\def\fourthellip{(6, 8cm) ellipse [x radius=5cm, y
		radius=0.77cm, rotate=-65]}	
	\def\fifthdellip{(3,6) ellipse [x radius=4.33cm, y radius=0.6cm, rotate=33]} 
	\def\sixthdellip{(5,6) ellipse [x radius=4.33cm, y radius=0.6cm, rotate=-33]} 
	
	\begin{tikzpicture}[thick, scale=0.5]
	\filldraw 
	(4,12) circle (2pt) node [label={[xshift=0.41cm, yshift=-0.2cm]$1$}] {};
	\filldraw 
	(4,6.5) circle (2pt) node [label={[xshift=0cm, yshift=-0.19cm]$2$}] {};
	\filldraw 
	(4,4.3) circle (2pt) node [label={[xshift=0.1cm, yshift=-0.7cm]$4$}] {};
	\filldraw 
	(0.1,4) circle (2pt) node [label={[xshift=-0.2cm, yshift=-0.8cm]$6$}] {};
	\filldraw 
	(2,8) circle (2pt) node [label={[xshift=-0.2cm, yshift=-0.64cm]$5$}] {};
	\filldraw 
	(7.9,4) circle (2pt) node [label={[xshift=0.4cm, yshift=-0.8cm]$3$}] {};
	(
	\filldraw 
	(6,8) circle (2pt) node [label={[xshift=0.2cm, yshift=-0.7cm]$7$}] {};
	\draw \firstellip node [label={[xshift=4cm, yshift=-0.5cm]}] {};
	\draw \secondellip node [label={[xshift=4.5cm, yshift=-0.6cm]}] {};
	\draw \thirdellip node [label={[xshift=4.5cm, yshift=-0.5cm]}] {};
	\draw \fourthellip node [label={[xshift=3cm, yshift=-2.5cm]}] {};
	\draw \fifthdellip node [label={[xshift=3cm, yshift=-2.5cm]}] {};
	\draw \sixthdellip node [label={[xshift=3cm, yshift=-2.5cm]}] {};
	\end{tikzpicture} 
	\caption{\label{fig:FanoPlane}The Fano plane minus the hyperedge $\{4,5,7\}$.}
	\end{figure}

\begin{ex}\rm
Set $\mbH$ as the Fano plane, i.e., the smallest projective plane. Namely, $\mbH=(V,\maH)$, where e.g. $V=\llbracket 1,7 \rrbracket$ and 
		$$\maH=\left\{\{1,2,4\},\{1,5,6\},\{1,3,7\},\{2,3,5\},\{4,5,7\},\{4,3,6\},\{6,2,7\}\right\}.$$
If $\mathbb{H}'=(\maV,\maH')$  is the sub-hypergraph defined by $\maH'=\maH\backslash H$, where $H$ is an arbitrary hyperedge of $\maH$. 
	Then it is easily seen that $\mathbb{H}'$ is a 3-uniform bipartite hypergraph with $V_1=H$ and $V_2=\maV\backslash H$. So we deduce from 
	Proposition \ref{prop:HbipartiteInstable} that $\mathbb{H}'$ is non-stabilizable. A Fano plane minus the hyperedge 
	$\{4,5,7\}$ is represented in Figure \ref{fig:FanoPlane}. 
\end{ex}


\begin{proposition}
	\label{prop:Hr-partiteInstable}
	Any $r$-uniform and $k$-partite hypergraph $(\textrm{for }k \ge r)$ is non-stabilizable. 
	\end{proposition}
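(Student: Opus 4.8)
The plan is to reduce the statement to the transversal-based necessary condition of Proposition \ref{prop:Ncond2} and then extract a purely numerical contradiction from the partition structure. First I would record two elementary facts. Since $\mbH$ is $r$-uniform we have $r(\mbH)=r$. Moreover, by the very definition of $k$-partiteness, each block $V_i$ of the partition is a transversal of $\mbH$: for every hyperedge $H\in\maH$ we have $|H\cap V_i|=1\neq 0$, hence $H\cap V_i\neq\emptyset$, so $V_i\cap H\neq\emptyset$ for all $H$ and therefore $V_i\in\maT(\mbH)$ for each $i\in\llbracket 1,k\rrbracket$.

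Next I would argue by contradiction. Suppose $\textsc{Stab}(\mbH,\Phi)\neq\emptyset$ for some admissible $\Phi$, and fix $\mu\in\textsc{Stab}(\mbH,\Phi)$. By Proposition \ref{prop:Ncond2}, $\mu\in\mathscr N_2(\mbH)$, so $\mu(T)>1/r(\mbH)=1/r$ for every transversal $T$. Applying this to each block of the partition yields $\mu(V_i)>1/r$ for all $i\in\llbracket 1,k\rrbracket$. Since $V_1,\dots,V_k$ is a partition of $\maV$, summing these strict inequalities gives
\[
1=\mu(\maV)=\sum_{i=1}^{k}\mu(V_i)>\frac{k}{r}\ge 1,
\]
where the final inequality uses the hypothesis $k\ge r$. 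This is absurd, so no such $\mu$ can exist and $\textsc{Stab}(\mbH,\Phi)=\emptyset$ for every admissible $\Phi$.

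I do not expect any genuine obstacle here: the argument is the direct hypergraphical analogue of the closing step in the proof of Proposition \ref{prop:HbipartiteInstable}, the only point requiring a moment's care being the observation that each part $V_i$ is a transversal, which is immediate from the $k$-partite definition. One should also note that $r$-uniformity combined with $k$-partiteness already forces every hyperedge to have exactly $k$ vertices, whence $k=r$; thus the summation in fact reads $1>k/r=1$ directly, and the stated hypothesis $k\ge r$ only serves to present the estimate in its general form.
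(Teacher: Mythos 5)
Your proof is correct, but it takes a genuinely shorter route than the paper's. The paper first goes back to the pathwise identity (\ref{eq:base}): since every hyperedge meets each part exactly once, $X_n(V_i)=A_n(V_i)-M_n(\maH)$ for every $i$, whence $X_n(V_i)\ge A_n(V_i)-A_n(V_j)$ and, by the SLLN argument, stability forces $\mu(V_i)\le\mu(V_j)$ for all pairs; by symmetry $\mu(V_i)=1/k$ for all $i$, and only then is the transversal condition $\mathscr N_2$ invoked to conclude. You bypass the dynamics entirely: each $V_i$ is a transversal, so Proposition \ref{prop:Ncond2} already gives the strict bound $\mu(V_i)>1/r(\mbH)=1/r$, and summing over the partition yields $1=\sum_{i=1}^k\mu(V_i)>k/r\ge 1$, a contradiction. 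Both arguments lean on Proposition \ref{prop:Ncond2} as the key necessary condition; yours buys brevity and avoids rerunning the SLLN comparison, while the paper's detour yields a stronger intermediate fact, namely that \emph{any} stable measure on an $r$-uniform $k$-partite hypergraph must put mass exactly $1/k$ on each part, a structural statement that holds irrespective of the relation between $k$ and $r$. Your closing observation is also accurate: with the paper's definition of $k$-partiteness ($|H\cap V_i|=1$ for every $i$ and every $H$), every hyperedge has exactly $k$ vertices, so $r$-uniformity forces $k=r$ and the hypothesis $k\ge r$ is automatically an equality; the strictness of the inequality in $\mathscr N_2$ is then what makes the contradiction go through.
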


\begin{proof}
As in the above proof we get that for any $i\ne j$ and any $n$, 
		\begin{equation*}
		X_n(V_j) =A_n(V_j)-\mathcal{M}_n(\maH)\ge 0 \quad \mbox{ and }\quad X_n(V_i) =A_n(V_i)-\mathcal{M}_n(\maH)\ge 0,
		\end{equation*}
		implying that $X_n(V_i)\ge A_n(V_i)-A_n(V_j)$, and in turn, that the model cannot be stable unless $\mu(V_i) \le \mu(V_j)$. 
		By symmetry, this implies that $\mu(V_i)=\mu(V_j)$. As the $V_i$'s are disjoint, we thus have that $\mu(V_i)=1/k$ for all $i$. 
		Thus, as any $V_i$ is a transversal of $\mbH$, $\mu $ is not an element of $\mathscr N_2(\mbH)$. 
		\end{proof}

\begin{definition}
\rm
 An hypergraph $\mbH=(V,\maH)$ satisfies Hall's condition if $|V_2| \geq |V_1|$ for any disjoint subsets $V_2$ and $V_1$ of $V$ satisfying 
	$|H\cap V_2|\geq |H\cap V_1|$ for all hyperedges $H\in\maH$. 
	\end{definition}
It is well known (see \cite{Hall35} for the particular case of graphs, and the general result in \cite{PH95}) that Hall's condition is necessary and sufficient for the existence of a perfect matching on $\mbH$, i.e. a spanning sub-hypergraph of $\mbH$ in which all nodes have degree 1, in the case where the hypergraph is balanced, i.e. it does not contain any odd 
strong cycle. It is intuitively clear that the construction of stable stochastic matching models on hypergraphs is somewhat reminiscent of that of perfect matchings on a growing hypergraphs that replicates the matching hypergraph a large number of times in the long run (in the case of graphs, see the discussion in Section 7 of \cite{MoyPer17}). This connexion has a simple illustration in the next Proposition, which provides a family of probability measures, naturally including the uniform measure on $V$, that cannot stabilize a matching model on the hypergraph $\mbH$ unless the latter satisfies Hall's condition. In what follows we denote for any $\mbH=(V,\maH)$ and any measure $\mu\in\mathscr M(V)$, 
\begin{equation}
\label{eq:defmuminmax}
\mumin=\min\left\{\mu(i)\,:\,i\in V\right\}\quad\mbox{ and }\mumax=\max\left\{\mu(i)\,:\,i\in V\right\}.
\end{equation}
\begin{proposition}
\label{pro:Hall}
	For any hypergraph $\mathbb{H}=(V,\maH)$ that violates Hall's condition, 
	any matching policy $\Phi$ and any $\mu\in \mathscr M(V)$ such that 
	\begin{equation}
	\label{eq:condHall}
	{\mumini \over \mumaxi} > {\left\lfloor {q(\mbH) +1 \over 2}\right\rfloor -1 \over \left\lfloor {q(\mbH) +1 \over 2}\right\rfloor}, 
	\end{equation}
	the model $(\mbH,\Phi,\mu)$ is {instable}. In particular,  $(\mbH,\Phi,\mu_{\textsc{u}})$ is {instable} for $\mu_{\textsc{u}}$ the uniform distribution on $V$. 
\end{proposition}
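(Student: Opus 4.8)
The plan is to exploit the balance relation (\ref{eq:base}) on the pair of witness sets produced by the failure of Hall's condition, exactly in the spirit of the proof of Proposition \ref{prop:HbipartiteInstable}, and then to convert the hypothesis on the ratio $\mumin/\mumax$ into a strict inequality between $\mu$-masses by an elementary counting argument. First I would unpack the negation of Hall's condition: there exist two disjoint subsets $V_1,V_2\subset V$ with $|H\cap V_2|\ge |H\cap V_1|$ for every $H\in\maH$, and yet $|V_1|>|V_2|$. Applying (\ref{eq:base}) to $B=V_2$ gives $\sum_{H\in\maH}|H\cap V_2|M_n(H)\le A_n(V_2)$, and since $|H\cap V_1|\le |H\cap V_2|$ for every $H$ while $M_n(H)\ge 0$, applying (\ref{eq:base}) to $B=V_1$ yields, for all $n$,
$$X_n(V_1)=A_n(V_1)-\sum_{H\in\maH}|H\cap V_1|M_n(H)\ \ge\ A_n(V_1)-A_n(V_2).$$

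The heart of the argument is then to show that the hypothesis (\ref{eq:condHall}) forces $\mu(V_1)>\mu(V_2)$. Using the crude bounds $\mu(V_1)\ge |V_1|\,\mumin$ and $\mu(V_2)\le |V_2|\,\mumax$, it suffices to establish $|V_1|\,\mumin>|V_2|\,\mumax$, i.e. $\mumin/\mumax>|V_2|/|V_1|$. Since $V_1$ and $V_2$ are disjoint subsets of a set of cardinality $q=q(\mbH)$ and $|V_1|>|V_2|$, one has $2|V_2|<|V_1|+|V_2|\le q$, whence $|V_2|\le \lfloor (q+1)/2\rfloor -1$. Combining this with $|V_1|\ge |V_2|+1$ and the monotonicity of $s\mapsto s/(s+1)$ gives
$$\frac{|V_2|}{|V_1|}\ \le\ \frac{|V_2|}{|V_2|+1}\ \le\ \frac{\left\lfloor (q+1)/2\right\rfloor -1}{\left\lfloor (q+1)/2\right\rfloor},$$
which is exactly the right-hand side of (\ref{eq:condHall}); the hypothesis therefore yields $\mumin/\mumax>|V_2|/|V_1|$ and hence $\mu(V_1)>\mu(V_2)$.

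Finally I would apply the SLLN to $A_n(V_1)/n\to\mu(V_1)$ and $A_n(V_2)/n\to\mu(V_2)$ in the displayed lower bound, obtaining $\limsup_n X_n(V_1)/n\ge \mu(V_1)-\mu(V_2)>0$, so that $X_n(V_1)\to\infty$ almost surely and the chain $\suite{W_n}$ is transient, i.e. the model is instable. For the uniform measure $\mumin=\mumax=1/q$, so the ratio equals $1$, which strictly exceeds the right-hand side of (\ref{eq:condHall}) (itself strictly below $1$); hence $(\mbH,\Phi,\mu_{\textsc{u}})$ is covered as a special case. I expect the only mildly delicate point to be the combinatorial bound $|V_2|\le \lfloor (q+1)/2\rfloor -1$ and the verification that the monotone map $s\mapsto s/(s+1)$ makes the threshold in (\ref{eq:condHall}) precisely the worst admissible value of $|V_2|/|V_1|$; everything else is a direct replay of the SLLN-based transience scheme used repeatedly in the preceding propositions.
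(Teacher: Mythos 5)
Your proof is correct, and while it follows the paper's overall scheme---extract a Hall-violating disjoint pair $(V_1,V_2)$, show that (\ref{eq:condHall}) forces $\mu(V_1)>\mu(V_2)$, then conclude instability from (\ref{eq:base}) and the SLLN---it handles the key combinatorial step by a genuinely different and more economical route. The paper first establishes the general monotonicity property (\ref{eq:monotonemu}): for \emph{arbitrary}, possibly overlapping, subsets $E,F\subset V$, $|E|<|F|$ implies $\mu(E)<\mu(F)$. Because $E$ and $F$ may overlap, this requires sorting the values $\{\mu(i):i\in V\}$ through a bijection $\alpha$ and a two-case analysis on $k=|F|$: when $k>\left\lfloor {(q+1)/ 2}\right\rfloor$ the two extremal sums share common terms which must be cancelled before comparing $\mumin$ against $\mumax$. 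You instead exploit the disjointness of $V_1$ and $V_2$, which is built into the very definition of Hall's condition: disjointness and $|V_1|>|V_2|$ give $|V_2|\le \left\lfloor {(q+1)/2}\right\rfloor-1$ and $|V_1|\ge |V_2|+1$, so the crude bounds $\mu(V_1)\ge |V_1|\,\mumin$ and $\mu(V_2)\le |V_2|\,\mumax$ already yield $\mu(V_1)>\mu(V_2)$ under (\ref{eq:condHall}). This is shorter, avoids the sorting and case analysis entirely, and makes transparent why the threshold in (\ref{eq:condHall}) is what it is (it is exactly the worst-case ratio $|V_2|/|V_1|$ over admissible disjoint pairs); note that your crude bounds would \emph{not} suffice to prove the paper's stronger statement (\ref{eq:monotonemu}) for non-disjoint pairs (where $k$ can be as large as $q$), but that extra generality is never used in the proposition. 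The concluding transience arguments are essentially equivalent: you apply (\ref{eq:base}) to $V_1$ alone and bound $\sum_{H\in\maH}|H\cap V_1|M_n(H)\le \sum_{H\in\maH}|H\cap V_2|M_n(H)\le A_n(V_2)$, whereas the paper sums the relations for $V_1$ and $V_2$; both produce the lower bound $A_n(V_1)-A_n(V_2)$ up to nonnegative terms, and the same SLLN conclusion.
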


\begin{proof}
Fix $\mbH$, $\Phi$, and a measure $\mu$ satisfying (\ref{eq:condHall}). We first show that $\mu$ is monotonic with respect to the counting measure on $V$, i.e. 
\begin{equation}
\label{eq:monotonemu}
\forall E,F \subset V,\quad |E| < |F| \Longrightarrow \mu(E) < \mu(F). 
\end{equation}
Let $E$ and $F$ be such that $|E| < |F|$, and let $k = |F|$. Let also $\alpha$ be a bijection from $\llbracket 1,q(\mbH) \rrbracket$ to $V$ such that 
\begin{equation}
\label{eq:defalpha}
\mumin=\mu(\alpha(1)) \le \mu(\alpha(2)) \le ... \le \mu(\alpha(q(\mbH)))=\mumax,
\end{equation}
 in other words 
$\left(\mu(\alpha(1)),\mu(\alpha(2)),...,\mu(\alpha(q(\mbH)))\right)$ is an ordered (in increasing order) version of the family $\left\{\mu(i);\, i \in V\right\}$. 
As $|E|\le k-1$ we clearly have 
\begin{equation}
\label{eq:mad0}
\mu(F) - \mu(E) \ge \sum_{i=1}^k \mu(\alpha(i)) - \sum_{i=q-k+2}^q \mu(\alpha(i)). 
\end{equation}
First, if $k \le \left\lfloor {q(\mbH) +1 \over 2}\right\rfloor$, (\ref{eq:condHall}) entails that $k\mumin >(k-1) \mumax,$ whence 
\begin{equation}
\label{eq:mad1}
\sum_{i=1}^k \mu(\alpha(i)) - \sum_{i=q-k+2}^q \mu(\alpha(i)) \ge k \mumin - (k-1)\mumax >0,
\end{equation}
If $k>\left\lfloor {q(\mbH) +1 \over 2}\right\rfloor$, then the index sets $\llbracket 1,k \rrbracket$ and  $\llbracket q-k+2, q \rrbracket$ intersect precisely 
on $\llbracket q-k+2,k \rrbracket$. 
thus 
\begin{align}
\sum_{i=1}^k \mu(\alpha(i)) - \sum_{i=q-k+2}^q \mu(\alpha(i)) &= \sum_{i=1}^{q-k+1} \mu(\alpha(i)) - \sum_{i=k+1}^q \mu(\alpha(i))\nonumber\\
&\ge  (q-k+1)\mumin - (q-k)\mumax >0, \label{eq:mad2}
\end{align}
where the last inequality follows, as in (\ref{eq:mad1}), from the fact that $q-k+1\le \left\lfloor {q(\mbH) +1 \over 2}\right\rfloor$. 
Gathering (\ref{eq:mad0}) with (\ref{eq:mad1}-\ref{eq:mad2}) concludes the proof of (\ref{eq:monotonemu}) in all cases. 

Now fix $V_2$ and $V_1$ such that $|H\cap V_2|\geq |H\cap V_1|$ for any $H\in\maH$, and $|V_2|<|V_1|$ which from (\ref{eq:monotonemu}), implies that 
$\mu(V_2) < \mu(V_1)$. Then, 
applying again (\ref{eq:base}) to $V_2$ and $V_1$ we get that 
\begin{align*}
\label{eq:compareXY}
X_n(V_2)+X_n(V_1)
&\ge A_n(V_2)+A_n(V_1)-2\sum\limits_{H\in\maH}\left|H \cap V_2\right|\mathcal{M}_n(H)\\
&\ge A_n(V_2)+A_n(V_1)-2A_n(V_2),
\end{align*}
thus, from the usual argument, the model cannot be stable unless $\mu(V_2)\geq \mu(V_1)$, a contradiction.
\end{proof}



\subsection{Cycles}
\label{subsec:cycles}

\begin{definition}
	\rm 
	An $r$-uniform hypergraph $\mbH$ ($r\geq 2$) is called an $\ell$-(Hamiltonian) cycle ($0<\ell<r$), if there exists an ordering $\maV=\left(v_1,v_2,\cdots\,,v_{q(\mbH)}\right)$ of the nodes of 
	$V$ such that:
	\begin{itemize}
		\item {E}very hyperedge of $\maH$ consists of $r$ consecutive nodes modulo $q(\mbH)$; 
		\item Any couple of consecutive hyperedges (in an obvious sense) intersects in exactly $\ell$ vertices. 
	\end{itemize}
\end{definition}

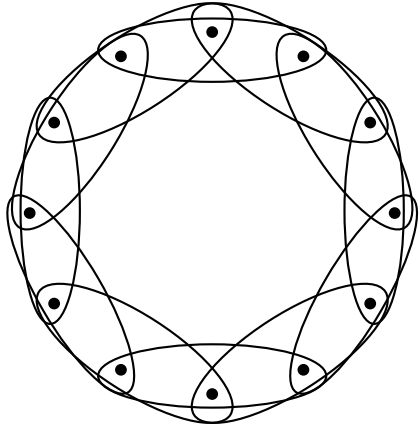
\begin{figure}[htp!]
	\def\tfourthellip{(0, 3.6cm) ellipse [x radius=2.5cm, y radius=0.7cm, rotate=180]} 
	\def\firstellip{(1.7, 3.1) ellipse [x radius=2.5cm, y radius=0.85cm, rotate=-33]}
	\def\secondellip{(2.9, 1.8cm) ellipse [x radius=2.5cm, y radius=0.8cm, rotate=-58]} 
	\def\thirdellip{(3.55, 0.05) ellipse [x radius=2.5cm, y radius=0.65cm, rotate=-90]}
	\def\fourthellip{(3.1, -1.8cm) ellipse [x radius=2.5cm, y radius=0.7cm, rotate=60]} 
	\def\sfirstellip{(1.7, -3.1) ellipse [x radius=2.5cm, y radius=0.85cm, rotate=33]}
	
	\def\ssecondellip{(0, -3.6cm) ellipse [x radius=2.5cm, y radius=0.7cm, rotate=180]} 
	\def\sthirdellip{(-1.7, 3.1) ellipse [x radius=2.5cm, y radius=0.85cm, rotate=33]}
	\def\sfourthellip{(-2.9, 1.8cm) ellipse [x radius=2.5cm, y radius=0.8cm, rotate=58]} 
	\def\tfirstellip{(-3.55, 0.05) ellipse [x radius=2.5cm, y radius=0.65cm, rotate=90]}
	\def\tsecondellip{(-3.1, -1.8cm) ellipse [x radius=2.5cm, y radius=0.7cm, rotate=-60]} 
	\def\tthirdellip{(-1.7, -3.1) ellipse [x radius=2.5cm, y radius=0.85cm, rotate=-33]}

	\begin{tikzpicture}[thick, scale=0.6]
	\filldraw (-0*360/12:4) circle (3pt) node [above]{$$};
	\filldraw (-1*360/12:4) circle (3pt) node [above]{$$};
	\filldraw (-2*360/12:4) circle (3pt) node [above]{$$};
	\filldraw (-3*360/12:4) circle (3pt) node [above]{$$};
	\filldraw (-4*360/12:4) circle (3pt) node [above]{$$};
	\filldraw (-5*360/12:4) circle (3pt) node [above]{$$};
	\filldraw (-6*360/12:4) circle (3pt) node [above]{$$};
	\filldraw (-7*360/12:4) circle (3pt) node [above]{$$};
	\filldraw (-8*360/12:4) circle (3pt) node [above]{$$};
	\filldraw (-9*360/12:4) circle (3pt) node [above]{$$};
	\filldraw (-10*360/12:4) circle (3pt) node [above]{$$};
	\filldraw (-11*360/12:4) circle (3pt) node [above]{$$};
	\draw \firstellip node [label={[xshift=1.5cm, yshift=0.8cm]$$}] {};
	\draw \secondellip node [label={[xshift=2.2cm, yshift=0cm]$$}] {};
	\draw \thirdellip node [label={[xshift=2.0cm, yshift=-0.8cm]$$}] {};
	\draw \fourthellip node [label={[xshift=2.1cm, yshift=-0.6cm]$$}] {};
	\draw \sfirstellip node [label={[xshift=2.1cm, yshift=-0.8cm]$$}] {};
	\draw \ssecondellip node [label={[xshift=0cm, yshift=-2cm]$$}] {};
	\draw \sthirdellip node [label={[xshift=-0.3cm, yshift=1.2cm]$$}] {};
	\draw \sfourthellip node [label={[xshift=-2.2cm, yshift=0.4cm]$$}] {};
	\draw \tfirstellip node [label={[xshift=-1.6cm, yshift=-0.4cm]$$}] {};
	\draw \tsecondellip node [label={[xshift=-1.5cm, yshift=-0.9cm]$$}] {};
	\draw \tthirdellip node [label={[xshift=-1.0cm, yshift=-1.8cm]$$}] {};
	\draw \tfourthellip node [label={[xshift=0cm, yshift=1.1cm]$$}] {};


	\end{tikzpicture}
	\caption{A $3$-uniform $2$-cycle of order $12$. \label{fig:cycle}}
\end{figure}

\begin{proposition}
	\label{prop:cycles}
	Any $r$-uniform $\ell$-cycle of order $q$ such that $r$ divides $q$, is non-stabilizable. 
\end{proposition}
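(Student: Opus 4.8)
The plan is to show that, whatever the measure $\mu\in\mathscr M(V)$ may be, one can always produce a transversal of $\mbH$ whose $\mu$-mass does not exceed $1/r$, which contradicts the membership $\mu\in\mathscr N_2(\mbH)$ forced by Proposition \ref{prop:Ncond2} (recall that $r(\mbH)=r$ since $\mbH$ is $r$-uniform). This immediately yields $\textsc{Stab}(\mbH,\Phi)=\emptyset$ for every admissible $\Phi$.

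Write the cyclic ordering of the nodes as $\maV=(v_1,\dots,v_q)$, and for each residue $j\in\llbracket 1,r\rrbracket$ set
\[
T_j=\left\{v_i\,:\,i\in\llbracket 1,q\rrbracket,\; i\equiv j \pmod{r}\right\}.
\]
The crucial point, and the place where the hypothesis $r\mid q$ enters, is that reduction modulo $r$ is well defined on the cyclic index set $\Z/q\Z$ precisely because $r$ divides $q$; consequently the families $T_1,\dots,T_r$ form a partition of $\maV$, each of cardinality $q/r$. I would first check that every $T_j$ is a transversal of $\mbH$: by definition each hyperedge $H\in\maH$ consists of $r$ consecutive nodes $v_k,v_{k+1},\dots,v_{k+r-1}$ (indices taken modulo $q$), and since $r\mid q$ the reduction of the node index modulo $r$ commutes with the cyclic wrap-around from $v_q$ to $v_1$, so these $r$ consecutive indices realize all residues modulo $r$ exactly once. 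Hence $H$ meets each $T_j$ in exactly one node, so $T_j\cap H\neq\emptyset$ for all $H$, i.e. $T_j\in\maT(\mbH)$. Note that only the structure of the hyperedges as windows of $r$ consecutive nodes is used here; the overlap parameter $\ell$ plays no role.

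It then remains to invoke a simple pigeonhole argument: since $T_1,\dots,T_r$ partition $\maV$ we have $\sum_{j=1}^r\mu(T_j)=\mu(\maV)=1$, so there exists $j^\star\in\llbracket 1,r\rrbracket$ with $\mu(T_{j^\star})\le 1/r$. As $T_{j^\star}$ is a transversal, this shows $\mu\notin\mathscr N_2(\mbH)$, and Proposition \ref{prop:Ncond2} gives $\mu\notin\textsc{Stab}(\mbH,\Phi)$ for every $\Phi$. Since $\mu$ was arbitrary, $\mbH$ is non-stabilizable.

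The argument is essentially combinatorial and I do not expect any analytic difficulty; the only step requiring genuine care is verifying that the residue classes modulo $r$ are consistent all the way around the cycle, which is exactly guaranteed by $r\mid q$. Indeed, if $r\nmid q$ the wrap-around from $v_q$ to $v_1$ shifts residues by $-q\not\equiv 0\pmod r$, the sets $T_j$ need no longer be transversals, and the partition-plus-pigeonhole mechanism collapses. This is therefore the main (and essentially only) obstacle, and it is precisely what the divisibility hypothesis removes.
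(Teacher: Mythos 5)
Your proof is correct, and it rests on exactly the same combinatorial construction as the paper's: the partition of $\maV$ into the $r$ residue classes of the indices modulo $r$ (your $T_j$ are precisely the paper's sets $V_i=\left\lbrace v_{i+(j-1)r}\,:\,j\in\llbracket 1,q/r\rrbracket\right\rbrace$), together with the observation that, because $r$ divides $q$, every hyperedge --- being a window of $r$ consecutive nodes modulo $q$ --- meets each class exactly once. Where you diverge is in the conclusion. The paper notes that this partition exhibits $\mbH$ as $r$-uniform and $r$-partite and invokes Proposition \ref{prop:Hr-partiteInstable}, whose own proof uses the balance relation (\ref{eq:base}) and an SLLN argument to show that stability would force $\mu(V_i)=\mu(V_j)$ for all $i,j$, hence $\mu(V_i)=1/r$ for each class, contradicting Proposition \ref{prop:Ncond2} since each class is a transversal. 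You bypass the dynamics entirely: since the $T_j$ partition $\maV$, the pigeonhole principle gives some $T_{j^\star}$ with $\mu(T_{j^\star})\le 1/r$, and since $T_{j^\star}$ is a transversal this already places $\mu$ outside $\mathscr N_2(\mbH)$, so Proposition \ref{prop:Ncond2} applies directly to an arbitrary $\mu$. Your route is more elementary and self-contained, needing only Proposition \ref{prop:Ncond2}; it also incidentally shows that the symmetry argument inside Proposition \ref{prop:Hr-partiteInstable} is not needed to get non-stabilizability (pigeonhole would shorten that proof as well, since the $k$ classes of any $k$-partite structure with $k\ge r$ are transversals partitioning $\maV$). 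What the paper's route buys is brevity on the page and the placement of the cycle result as a corollary of the general $k$-partite theory.
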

\begin{proof}
	The partition $V_1,V_2,\cdots,V_r$ of $\maV$ defined by 
	$$V_i=\left\lbrace v_{i+(j-1)r}\;;\,j\in \llbracket 1,q/r\rrbracket \right\rbrace,$$
	satisfies Proposition \ref{prop:Hr-partiteInstable} for $k\equiv r$. 
	 \end{proof}

\noindent Figure \ref{fig:cycle} shows a $3$-uniform $2$-cycle of order $12$.


\section{Stable systems}
\label{sec:stable}
We show hereafter that stable matching models on hypergraphs do exist. We provide two examples of hypergraphs on 
which a stable stochastic matching model can be defined: complete $3$-uniform hypergraphs, and sub-hypergraphs of the latter where several hyperedges are erased. 

\subsection{Complete $3$-uniform hypergraphs}
\label{subsec:complete}
We first consider the case of a complete $3$-uniform hypergraph $\mbH$, an example of which for $q(\mbH)=4$ is represented in Figure \ref{fig:complete}. 
We show that in this case, the necessary condition given in Proposition 
\ref{prop:Ncond3} is also sufficient, 

\begin{figure}
	\def\firstellip{(0.15, 2.5) ellipse [x radius=4cm, y radius=0.8cm, rotate=180]}
	\def\secondellip{(0.15, 1.6cm) ellipse [x radius=4cm, y radius=0.8cm, rotate=180]} 
	\def\thirdellip{(0.8, 1.7) ellipse [x radius=3cm, y radius=1.3cm, rotate=90]} 
	\def\fourthellip{(-0.8, 1.7cm) ellipse [x radius=3cm, y
		radius=1.3cm, rotate=-90]}	
	\begin{tikzpicture}
		\filldraw 
	(1.8,2) circle (2pt) node [left] {$\;4$};
	\filldraw 
	(0,2.8) circle (2pt) node [right] {$1$};
	\filldraw 
	(-1.8,2) circle (2pt) node [right] {$\;3$};
	\filldraw 
	(0,1.2) circle (2pt) node [right] {$2$};
	\draw \firstellip node [label={[xshift=4.5cm, yshift=-0.5cm]$H_3$}] {};
	\draw \secondellip node [label={[xshift=4.5cm, yshift=-0.6cm]$H_4$}] {};
	\draw \thirdellip node [label={[xshift=-3cm, yshift=-2.5cm]$H_1$}] {};
	\draw \fourthellip node [label={[xshift=3cm, yshift=-2.5cm]$H_2$}] {};
	\end{tikzpicture} 
	\caption{\label{fig:complete} Complete $3$-uniform hypergraph of order $4$.}
\end{figure}
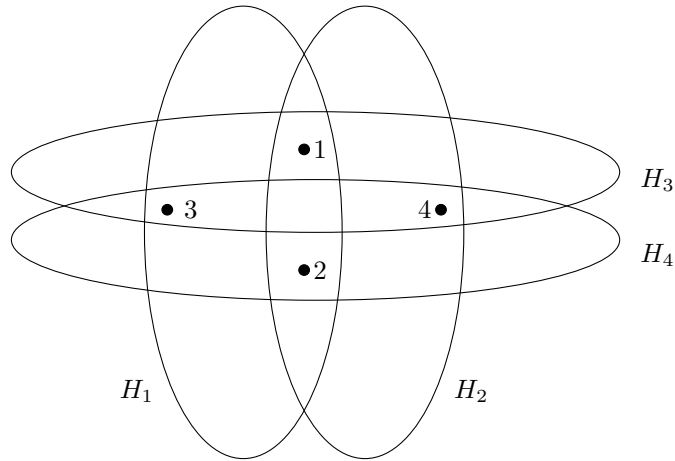

\begin{theorem}
\label{thm:stable3unif}
Let $\mbH$ be a complete $3$-uniform hypergraph of order $q(\mbH)\ge 4$. 
Then, for any admissible policy $\Phi$ we have that 
\[\textsc{Stab}(\mbH,\Phi) = \mathscr N_3^{\scriptsize{-}}(\mbH),\]
that is, the model $(\mbH,\Phi,\mu)$ is stable if and only if $\mu(i) < 1/3$ for any $i\in V$. 
\end{theorem}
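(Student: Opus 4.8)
The plan is to prove the two inclusions separately. The inclusion $\textsc{Stab}(\mbH,\Phi) \subset \mathscr N_3^{\scriptsize{-}}(\mbH)$ is already furnished by Proposition \ref{prop:Ncond3}, since a complete $3$-uniform hypergraph is $3$-uniform with $a(\mbH)=r(\mbH)=3$. Hence the entire difficulty lies in the reverse inclusion $\mathscr N_3^{\scriptsize{-}}(\mbH) \subset \textsc{Stab}(\mbH,\Phi)$: I must show that whenever $\mu(i) < 1/3$ for all $i\in V$, the model is stable under \emph{any} admissible policy $\Phi$. Since stability must hold for all $\Phi$, the natural route is to find a single Lyapunov function that works uniformly, rather than analyzing policy-specific dynamics.

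\textbf{The Lyapunov approach.} First I would fix a measure $\mu\in\mathscr N_3^{\scriptsize{-}}(\mbH)$ and work with the class-content chain $\suite{X_n}$, which is a genuine Markov chain in $\N^q$ when $\Phi$ is one of the class-content policies; for time-dependent policies one argues on $\suite{W_n}$, but the key drift estimate can be carried out at the level of $X_n=[W_n]$. The candidate Lyapunov function is the total load $L(x)=\parallel x \parallel$, or possibly the quadratic $L(x)=\sum_i x(i)^2$, and I would invoke the Foster--Lyapunov criterion: it suffices to exhibit $\varepsilon>0$ and a finite set $K$ such that $\esp{L(X_{n+1}) - L(X_n) \mid X_n = x} \le -\varepsilon$ for all $x\notin K$. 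The crucial structural fact about a \emph{complete} $3$-uniform hypergraph is that any two stored items of distinct classes, together with an incoming item of a third class, form a hyperedge; more strongly, once at least three distinct classes are present in the buffer, essentially every arrival can complete some matching. This extreme connectivity is what should let the drift be controlled independently of $\Phi$.

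\textbf{Key drift computation.} The heart of the argument is to show that when $\parallel x\parallel$ is large, the buffer must concentrate on at most two classes — because any three distinct classes present in storage can be matched immediately under any admissible policy, so at most two classes can accumulate. I would formalize this by arguing that the chain is eventually confined (up to a finite excursion) to the set of states supported on two classes, say classes $i$ and $j$. On such a state, an arrival of any class $k\notin\{i,j\}$ completes the hyperedge $\{i,j,k\}$ (removing one $i$ and one $j$), while an arrival of $i$ or $j$ is stored. The expected one-step change in $x(i)+x(j)$ is then governed by $\mu(i)+\mu(j) - 2\bigl(1-\mu(i)-\mu(j)\bigr)$; using $\mu(i)<1/3$ and $\mu(j)<1/3$ one checks $\mu(i)+\mu(j) < 2/3$, which makes this drift strictly negative. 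Making this rigorous requires handling the transient phase in which three or more classes may briefly coexist and verifying that the reduction to two-class states happens with a uniformly negative drift.

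\textbf{Main obstacle.} The principal difficulty is that the argument must be \emph{policy-independent}: different admissible policies route matchings differently, so the identity of the two surviving classes may itself fluctuate, and a naive fixed choice of $\{i,j\}$ will not be invariant. The hard part will be designing a Lyapunov function whose drift is negative simultaneously for every $\Phi$ — I expect one needs either a max-type function such as $L(x)=\max_i\sum_{j\ne i} x(j)$ capturing ``all but the largest class,'' or a careful case analysis over which classes are saturated, together with the uniform bound $\mu(i)<1/3$ to close every case. Establishing the finiteness of the exceptional set $K$ and the uniform-in-$\Phi$ drift bound off $K$ is where the real work resides; the rest is a routine application of the Foster--Lyapunov theorem to conclude positive recurrence.
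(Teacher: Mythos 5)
Your decomposition into necessity (via Proposition \ref{prop:Ncond3}) and sufficiency, and your two structural observations --- that on a complete $3$-uniform hypergraph the buffer never holds more than two distinct classes, and that on two-class states the content has strictly negative drift when $\mu(i)<1/3$ for all $i$ --- do match the skeleton of the paper's proof. Two corrections, though: the two-class property is an invariant from time zero, not an ``eventual'' confinement; and the obstacle you single out (policy-dependence of which two classes survive) is not an obstacle at all. When the buffer holds classes $i,j$ and some $k\notin\{i,j\}$ arrives, the \emph{only} completable hyperedge is $\{i,j,k\}$, since $\{i,i,k\}$ and the like are not hyperedges; hence every admissible policy induces exactly the same class-content chain $\suite{X_n}$, and policy-independence is automatic.

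The genuine gap is the boundary behaviour, which none of your candidate Lyapunov functions survives. On a one-class state $x=x_i\gre_i$ with $x_i$ arbitrarily large, no arrival can be matched (an arriving $j\ne i$ finds only copies of $i$ in the buffer), so every arrival is stored: the drift of $\|x\|$ is exactly $+1$ there, the drift of $\sum_i x_i^2$ is $2\mu(i)x_i+1>0$, and your max-type function $\max_i \sum_{j\ne i}x(j)=\|x\|-\min_i x_i$ coincides with $\|x\|$ on the whole state space, because $q\ge 4$ while at most two coordinates are non-zero, so $\min_i x_i=0$. Since one-class states form an \emph{infinite} set, Foster's criterion cannot be invoked for any of these functions: negative drift in the interior together with $\mu(i)+\mu(j)<2/3$ is not enough, because the chain is a reflected random walk in a quadrant whose axes push it back into the interior rather than toward the origin. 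This is precisely what the paper's construction is engineered to handle: it fixes $\max_i\mu(i)<\alpha<1/3$, introduces an auxiliary quarter-plane walk $U^\alpha$ dominating the true dynamics, and uses the Fayolle--Malyshev--Menshikov Lyapunov function $L^\alpha(x,y)=\sqrt{ux^2+uy^2+wxy}$ with $u=(1-3\alpha)/2$ and a \emph{negative} cross-coefficient $w$ satisfying (\ref{eq:deltas}); it is the term $wxy$ that makes $L^\alpha$ decrease on average even on the axes, where the walk jumps from $(x,0)$ to $(x,1)$. Checking those inequalities and transporting the drift bound from $U^\alpha$ to $X_n$ (using $\mu(i)<\alpha$ and the monotonicity of $L^\alpha$) is the real content of the sufficiency proof, and it is the part missing from your proposal.
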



\begin{proof}
	Necessity of the condition being shown in Proposition \ref{prop:Ncond3}, only the sufficiency remains to be proven. 
	Suppose that $\mu(i)<1/3$ for any $i\in V$, and fix $\alpha$ such that $\max_{i\in V} \mu(i) < \alpha <1/3$. 
	Define the planar Markov chain $\suite{U^{\alpha}_n}$ having the following transitions on $\N^2$, 
	\[\left\{\begin{array}{llll}
	\mbox{First axis:} \quad  &P^\alpha_{(x,0),(x+1,0)} &= \alpha,&\,x\in\N^+,\\
	&P^\alpha_{(x,0),(x,1)} &= 1-\alpha,&\,x\in\N^+,\\
	\mbox{Second axis:}  \quad  &P^\alpha_{(0,y),(0,y+1)} &= \alpha,&\,y\in\N^+,\\
	&P^\alpha_{(0,y),(1,y)} &= 1-\alpha,&\,y\in\N^+,\\
	\mbox{Interior:}  \quad & P^\alpha_{(x,y),(x+1,y)} &= \alpha,&\,x,y\in\N^+,\\
	&P^\alpha_{(x,y),(x,y+1)} &= \alpha,&\,x,y\in\N^+,\\
	&P^\alpha_{(x,y),(x-1,y-1)} &= 1- 2\alpha,&\,x,y\in\N^+,
	\end{array}\right.\]
	and arbitrary transitions from $(0,0)$ to any element of $\N^2$. (These transitions are represented in Figure \ref{Fig:transU}.) 
	\begin{figure}[htb]
		\begin{center}
			\begin{tikzpicture}
			\draw[->] (0,0) -- (5,0) ;
			\draw[->] (0,0) -- (0,5);
			\fill (2,0) circle (2pt);
			\draw[->, thick] (2,0) -- (2.5,0) node [above right]{\scriptsize{$\alpha$}};
			\draw[->, thick] (2,0) -- (2,0.5) node [above]{\scriptsize{$1-\alpha$}};
			%
			\fill (3,3) circle (2pt);
			\draw[->, thick] (3,3) -- (3.5,3) node [right]{\scriptsize{$\alpha$}};
			\draw[->, thick] (3,3) -- (3,3.5) node [above]{\scriptsize{$\alpha$}};
			\draw[->, thick] (3,3) -- (2.5,2.5) node [below]{\scriptsize{$1-2\alpha$}};
			%
			\fill (0,2) circle (2pt);
			\draw[->, thick] (0,2) -- (0.5,2) node [right]{\scriptsize{$1-\alpha$}};
			\draw[->, thick] (0,2) -- (0,2.5) node [right]{\scriptsize{$\alpha$}};
			%
			%
			%
			
			\end{tikzpicture}
			\caption{\label{Fig:transU} Auxiliary Markov chain of the complete 3-uniform hypergraph.}
		\end{center}
		
	\end{figure}
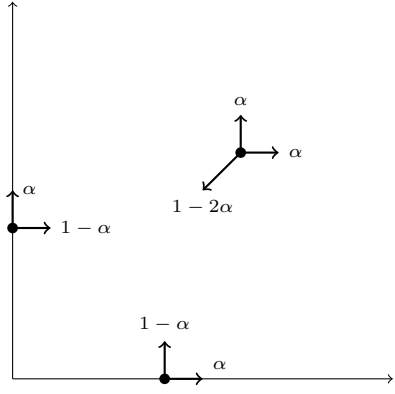
	
	Denote by $\Delta=(\Delta_x,\Delta_y)$, $\Delta'=(\Delta'_x,\Delta'_y)$ and $\Delta''=(\Delta''_x,\Delta''_y)$, the mean (horizontal and vertical) 
	drifts of the chain $\{U^\alpha_n\}$, respectively on the interior, on the first and on the second axis, in a way that 
	\[\left\{\begin{array}{lll}
	\mbox{First axis:} \quad  &\Delta'_x=\alpha, \quad &\Delta'_y=1-\alpha;\\
	\mbox{Second axis:}  \quad  &\Delta''_x=1-\alpha, \quad &\Delta''_y=\alpha;\\
	\mbox{Interior:}  \quad & \Delta_x=3\alpha-1, \quad &\Delta_y=3\alpha-1.
	\end{array}\right.\]
	Thus, $\Delta_x <0$ and $\Delta_y<0$. Also, we have that 
	\[\Delta_x\Delta'_y - \Delta_y\Delta'_x = \Delta_x\Delta''_y - \Delta_y\Delta''_x = (3\alpha-1)(1-2\alpha) <0,\]
	so we can apply Theorem 3.3.1, part (a) of \cite{FMM95}, to claim that the Markov chain $\{U^\alpha_n\}$ is positive recurrent. 
	Specifically, it can be checked that, setting $u={1-3\alpha\over 2}>0$, for any $w$ such that ${{3\alpha-1}} < w <{(3\alpha - 1)\alpha \over 1-\alpha}<0$ we have that 
	\begin{equation}
	\label{eq:deltas}
	\begin{cases}
	2u \Delta_x + w \Delta_y &< 0,\\
	2u \Delta_y + w \Delta_x &< 0,\\
	2u \Delta'_x + w \Delta'_y &< 0,\\
	2u \Delta''_y + w \Delta_x &< 0.
	\end{cases}
	\end{equation}
	Second, as $4u^2> w^2$ the quadratic form $Q:(x,y) \mapsto ux^2 +uy^2+wxy$ is positive definite. 
	Then, in view of Lemma 3.3.3 in \cite{FMM95}, it follows from (\ref{eq:deltas}) that, defining the mapping 
	\[L^\alpha : 
	\begin{cases}
	\N^2 &\longrightarrow \R_+ \\
	(x,y) &\longmapsto  \sqrt{Q(x,y)}=\sqrt{ux^2 +uy^2+wxy}, 
	\end{cases}\]
	we have that for some compact set $\mathcal K^\alpha\subset \N^2$, for any $(x,y) \in \left(\mathcal K^\alpha\right)^c$, 
	\begin{equation}
	\label{eq:lyapua}
	\esp{L^\alpha\left(U^\alpha_{n+1}\right) - L^\alpha(U^\alpha_n) \mid U^\alpha_n = (x,y)} <0. 
	\end{equation}
	
	Now, as $\mbH$ is complete $3$-uniform the states of the Markov chain $\suite{X_n}$ have at most two non-zero coordinates, 
	in other words its state space is 
	\[\mathcal E = \Bigl\{\grx=(x_1,...,x_q) \in\N^q \,:\, x_ix_jx_k=0 \mbox{ for any distinct }i,j,k \in \llbracket 1,q \rrbracket\Bigl\}.\]
	Define the mapping 
	\[L: 
	\left\{\begin{array}{ll}
	\mathcal E &\longrightarrow \R_+\\
	\grx &\longmapsto \left\{\begin{array}{ll}
	0      &\mbox{ if } \grx=\mathbf 0,\\ 
	L^\alpha((x,0))    &\mbox{ if $\grx=x.\gre_i$, for some $x>0$, $i\in V$},\\
	L^\alpha((x,y))  &\mbox{ if $\grx=x.\gre_i+y.\gre_j$, for some $x,y>0$, $i\ne j$,}  
	\end{array}\right.
	\end{array}\right.\]    
	where the above definition is unambiguous due to the fact that $L^\alpha$ is a symmetric form on $\N^2$. 
	Also define the compact set 
	\[\mathcal K = \left\{\grx :=x.\gre_i+y.\gre_j\in \mathcal E\,:\, (x,y) \in\mathcal K^\alpha\right\}.\]
	Then, first, if $\grx\in\mathcal K^c\cap \mathcal E$ is such that $\grx=x.\gre_i+ y.\gre_j$ for some $x,y>0$ and $i,j \in V$, $i\ne j$, we get that 
	\begin{multline*}
	\esp{L\left(X_{n+1}\right) - L(X_n) \mid X_n = \grx} \\
	\shoveleft{=(1-\mu(i)-\mu(j))\left(L\left(\grx-\gre_i-\gre_j\right)-L(\grx)\right)}\\
	\shoveright{+ \mu(i) \left(L\left(\grx+\gre_i\right)-L(\grx)\right) +  \mu(j) \left(L\left(\grx+\gre_j\right) - L(\grx)\right)}\\
	\shoveleft{=(1-\mu(i)-\mu(j))\left(L^\alpha\left(x-1,y-1\right)-L^\alpha(x,y)\right) }\\
	\shoveright{+ \mu(i) \left(L^\alpha\left(x+1,y\right)-L^\alpha(x,y)\right)+  \mu(j) \left(L^\alpha\left(x,y+1\right) - L^\alpha(x,y)\right)}\\
	\shoveleft{{< (1-2\alpha)}\left(L^\alpha\left(x-1,y-1\right)-L^\alpha(x,y)\right) }\\
	\shoveright{+ \alpha \left(L^\alpha\left(x+1,y\right)-L^\alpha(x,y)\right)+  \alpha \left(L^\alpha\left(x,y+1\right) - L^\alpha(x,y)\right)}\\
	= \esp{L^\alpha\left(U^\alpha_{n+1}\right) - L^\alpha(U^\alpha_n) \mid U^\alpha_n = (x,y)},
	\end{multline*}
	where, in the inequality above, we used the facts that $L^\alpha$ is non-decreasing in its first and second variables, and such that
	$L^\alpha\left(x-1,y-1\right){<}\, L^\alpha(x,y)$. 
	Likewise, if $\grx\in \mathcal K^c \cap \mathcal E$ is such that $\grx=x.\gre_i$ for some $x>0$ and $i\in V$, we have that 
	\begin{multline*}
	\esp{L\left(X_{n+1}\right) - L(X_n) \mid X_n = \grx} \\
	\begin{aligned}
	&=\sum_{j\ne i}\mu(j)\left(L\left(\grx+\gre_j\right)-L(\grx)\right)+ \mu(i) \left(L\left(\grx+\gre_i\right)-L(\grx)\right)\\
	&=(1-\mu(i))\left(L^\alpha\left(x,1\right)-L^\alpha(x,0)\right)+ \mu(i) \left(L^\alpha\left(x+1,0\right)-L^\alpha(x,0)\right)\\
	&{< (1-\alpha)}\left(L^\alpha\left(x,1\right)-L^\alpha(x,0)\right)+ \alpha \left(L^\alpha\left(x+1,0\right)-L^\alpha(x,0)\right)\\
	&= \esp{ L^\alpha\left(U^\alpha_{n+1}\right) - L^\alpha(U^\alpha_n) \mid U^\alpha_n = (x,0)},
	\end{aligned}
	\end{multline*}
	remarking that $L^\alpha(x,1) { < }\, L^\alpha(x,0)$. Recalling that $X_n=[W_n]$ for all $n$, 
	using (\ref{eq:lyapua}) in both cases, we conclude using the Lyapunov-Foster Theorem (see e.g. \cite{Bre99}, \S  5.1) that the chain $\suite{W_n}$ is positive recurrent. 
\end{proof}


\subsection{{Incomplete $3$-uniform hypergraphs}}
As is shown in  Theorem \ref{thm:stable3unif}, complete $3$-uniform hypergraphs are stabilizable for a large class of measures. 
We show hereafter that incomplete hypergraphs can also be stabilizable, 

\begin{theorem}
	\label{thm:suff3unifincomplet} 
	Let $\mathbb H=(\maV,\maH)$ be a complete $3$-uniform hypergraph of {order} $q \ge 5$, and let 
	$\mbH'=(\maV,\maH')$ be 
	the $(3\textrm{-uniform})$ sub-hypergraph of $\mbH$ obtained by setting $\maH'=\maH\backslash \maJ$, where  $\maJ$ is a subset of $\maH$ containing disjoint hyperedges. Let $J$ be the union of the elements of $\maJ$.
	 Then the model $(\mbH',\textsc{ml},\mu)$ is stable for any 
	$\mu$ in 
	\[\mathscr S(\mbH')=\biggl\{\mu\in\mathscr M(V):\left(\max\limits_{i\in J} \lambda_i(\mu)\vee \max\limits_{i\in \bar J} \nu_i(\mu)\right)<0\biggl\}\,\cap\,\mathscr N_2(\mathbb H')\, \cap\mathscr N^{\scriptsize{-}}_3(\mathbb H'),\]
	{where the $\lambda_i(\mu): i\in J$ and $\nu_{i}(\mu): i\in \bar J$ are defined respectively by (\ref{eq:Gdeflambdai}) and (\ref{eq:InCnu_i})}. 
\end{theorem}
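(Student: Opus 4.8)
The plan is to prove positive recurrence of the class-content chain $\suite{X_n}$ by exhibiting a Lyapunov function and invoking the Foster--Lyapunov theorem, exactly as in the proof of Theorem \ref{thm:stable3unif}; since $X_n=[W_n]$ this transfers to $\suite{W_n}$. The first step is a combinatorial description of the reachable state space $\maE'$. Because \textsc{ml} (like any admissible policy) performs a matching whenever one is available, at every time $n$ no hyperedge of $\maH'$ is fully occupied, so the occupied set $S_n=\{i\in V:\,X_n(i)>0\}$ contains no triple lying in $\maH'$. As the ambient hypergraph is complete $3$-uniform, $\maH\setminus\maH'=\maJ$, hence every triple contained in $S_n$ must belong to $\maJ$. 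But any set of four or more classes contains two distinct $3$-subsets sharing two elements, contradicting the disjointness of $\maJ$; thus either $|S_n|\le 2$, or $S_n\in\maJ$. Consequently $\maE'$ decomposes into two-dimensional ``edge faces'' indexed by pairs of classes, together with three-dimensional ``triple corners'' indexed by the hyperedges of $\maJ$, each corner being supported on the classes of a hyperedge of $\maJ$, all of which lie in $J$.

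Next I would analyse the \textsc{ml} dynamics in the regime where the buffer is large, i.e. where one coordinate $X_n(i)$ dominates. The decisive feature of \textsc{ml} is that it matches the hyperedge of largest content, so that, once $X_n(i)$ is large, every arrival completing a matchable hyperedge removes exactly one item of the dominant class $i$. I would then split according to whether $i\in\bar J$ or $i\in J$. If $i\in\bar J$, every triple through $i$ is still a hyperedge, so $i$ can coexist with at most one other class; locally the chain moves on a two-dimensional face and behaves like the interior/axes dynamics of the auxiliary chain $\suite{U^\alpha_n}$ of Theorem \ref{thm:stable3unif}, the effective drift of $X_n(i)$ being the quantity $\nu_i(\mu)$ of \eqref{eq:InCnu_i}. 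If $i\in J$ belongs to the removed triple $T\in\maJ$, then $i$ may in addition lie in the corner $T$, where arrivals of classes outside $T$ trigger matchings of $i$ with one of its triple-mates; there the effective drift of $X_n(i)$ is the quantity $\lambda_i(\mu)$ of \eqref{eq:Gdeflambdai}. The defining inequality of $\mathscr S(\mbH')$ forces $\nu_i(\mu)<0$ for $i\in\bar J$ and $\lambda_i(\mu)<0$ for $i\in J$, so in every dominance regime the largest coordinate has strictly negative drift.

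With the local drifts identified, I would build the Lyapunov function $L$ piecewise: on each edge face I would reuse a calibrated copy of the positive-definite quadratic form $L^\alpha$ of Theorem \ref{thm:stable3unif}, tuned so that its drift reproduces $\nu_i$, and on each triple corner the analogous three-variable positive-definite form tuned to $\lambda_i$, choosing all coefficients so that $L$ is continuous and non-decreasing across the lower-dimensional faces shared by corners and edges (a single globally weighted quadratic form is a plausible alternative). Computing $\esp{L(X_{n+1})-L(X_n)\mid X_n=\grx}$ and decomposing over the class of the arriving item, as in Theorem \ref{thm:stable3unif}, the inequalities $\nu_i<0$ and $\lambda_i<0$ deliver a strictly negative drift in the bulk; the membership $\mu\in\mathscr N_2(\mbH')\cap\mathscr N^{\scriptsize{-}}_3(\mbH')$ then controls the low-content and single-class boundary regions, preventing escape along a face where one class alone accumulates. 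This yields a uniform negative drift outside a finite set, and the Foster--Lyapunov theorem (\cite{Bre99}, \S\,5.1) gives positive recurrence.

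The main obstacle I anticipate is twofold. First, one must justify rigorously that under \textsc{ml} it is indeed the dominant coordinate that gets decremented: this needs a monotonicity argument showing that, once a coordinate is sufficiently large, the longest completable hyperedge always contains it, so that the heuristic drifts $\lambda_i,\nu_i$ are genuinely attained. Second, and more delicate, is gluing the face-wise and corner-wise quadratic pieces into a single function with controlled drift across the interfaces between two-dimensional faces and three-dimensional corners, where the local transition structure changes discontinuously. Handling these transition regions --- rather than the bulk estimate, which is a direct adaptation of Theorem \ref{thm:stable3unif} --- is where the technical heart of the argument lies.
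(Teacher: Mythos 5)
Your combinatorial description of the reachable states (supports of size at most $2$, or exactly a hyperedge of $\maJ$) is correct and matches the case analysis (i)--(v) of the paper, and Foster--Lyapunov is indeed the right framework. But there is a genuine gap at the center of your argument: you assert that, in a one-step dominance analysis, the ``effective drift'' of a large coordinate $X_n(i)$ is $\lambda_i(\mu)$ or $\nu_i(\mu)$. Those quantities are homogeneous quartic polynomials in $\mu$ (with coefficients $8,24,44,96,\dots$), and they arise, by definition, as the drift coefficients of the plain quadratic $L(x)=\sum_i x_i^2$ computed over \emph{four} steps of the chain: the paper works with the embedded chain $Y_n=X_{4n}$ and enumerates all arrival words of length $4$ from a single-class state, which is exactly where these coefficients come from. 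They cannot emerge from a one-step computation. Worse, from a single-class state $x_i.\gre_i$ no matching can be completed in one step at all (a $3$-uniform hyperedge needs three distinct classes in the buffer plus arrival), so the one-step drift of any coordinatewise non-decreasing function is nonnegative there; this is precisely why the paper passes to a multi-step embedded chain. Your proposal introduces no multi-step device, so the bulk estimate you describe as ``a direct adaptation of Theorem \ref{thm:stable3unif}'' is unjustified, and your fallback suggestion of ``a single globally weighted quadratic form'' with one-step drifts fails for the same reason.

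The alternative you actually sketch --- gluing copies of the two-dimensional form $L^\alpha$ of Theorem \ref{thm:stable3unif} on the edge faces with a three-variable analogue on the corners indexed by $\maJ$ --- is a conceivable substitute, since in Theorem \ref{thm:stable3unif} it is the negative cross term (giving $L^\alpha(x,1)<L^\alpha(x,0)$) that produces negative one-step drift on the axes without any embedding; but constructing the three-dimensional analogue and verifying the drift across the interfaces is exactly what you leave open, and it is the technical heart, not a peripheral detail. So neither the paper's device nor a working substitute is actually supplied. Two smaller inaccuracies: first, your attribution of $\nu_i$ to $i\in\bar J$ and $\lambda_i$ to $i\in J$ follows a labelling slip in the theorem statement but contradicts the definitions themselves --- \eqref{eq:InCnu_i} only makes sense for $i\in J$ (it refers to the unique removed hyperedge $H\in\maJ$ containing $i$), while \eqref{eq:Gdeflambdai} is the drift for $i\in\bar J$. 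Second, $\mathscr N_2(\mathbb H')$ does not serve to ``control low-content and single-class boundary regions'': in the paper it is what makes the drift coefficients negative on the two- and three-class faces, via the fact that $\maV\setminus\{i,j\}$ and $\bar H$ are transversals of $\mbH'$, whereas the single-class states are precisely where the conditions $\lambda_i(\mu)<0$ and $\nu_i(\mu)<0$ are invoked.
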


\begin{proof}
Fix $\Phi=\textsc{ml}$, and let $\mu\in {\mathscr S(\mbH')}$. For such $\mbH'$ the study of $\suite{Y_n}$ does not boil down to that of a planar Markov chain. 
Instead, we study the embedded chain $\suiten{Y_n}=\suiten{X_{4n}}$, 
 and consider the following quadratic Lyapunov function,  
\[L:\left\{\begin{array}{ll}
\N^q &\longrightarrow \R_+;\\
x &\longmapsto \sum_{i=1}^{q} (x_i)^2. 
\end{array}\right.\]
Fix $n\in\N$. We have the following alternatives given the value of the embedded chain $\{Y_n\}$ at time $n$,

(i) First, for any $i\in \overline{J}$, and any integer $x_i\geq{2}$, the chain $\suite{Y_n}$ makes the following transitions from state $Y_n=x_i.\gre_i$, 
\scriptsize{
	\begin{equation}
	\label{eq:GtransY1}
	Y_{n+1}=\left\{\begin{array}{ll}
	Y_n+4\gre_i &\mbox{ w.p. }\mu(i)^4;\\
	Y_n+3\gre_i+\gre_j &\mbox{ w.p. }4\mu(i)^3\mu(j);\\
	Y_n+2\gre_i+2\gre_j &\mbox{ w.p. }6\mu(i)^2\mu(j)^2;\\
	Y_n+\gre_i+3\gre_j &\mbox{ w.p. }4\mu(i)\mu(j)^3;\\
	Y_n+4\gre_j &\mbox{ w.p. }\mu(j)^4;\\
	Y_n+\gre_i &\mbox{ w.p. }12\mu(i)^2\mu(j)\mu(k);\\
	Y_n+\gre_j &\mbox{ w.p. }12\mu(i)\mu(j)^2\mu(k);\\
	Y_n-2\gre_i &\mbox{ w.p. }10\mu(j)^2\mu(k)\mu(\ell)\\
	& \mbox{ (\scriptsize{the input has 2 $j$, 1 $k$ and 1 $\ell$, but does not end in $jj$});}\\
	Y_n-\gre_i+2\gre_j &\mbox{ w.p. }2\mu(k)\mu(\ell)\mu(j)^2\\ 
	& \mbox{ (\scriptsize{the input has 2 $j$, 1 $k$ and 1 $\ell$, but  ending in $jj$});}\\
	Y_n-2\gre_i &\mbox{ w.p. }6\mu(j)^2\mu(k)^2;\\
	Y_n-\gre_i+2\gre_j &\mbox{ w.p. }4\mu(j)^3\mu(k);\\
	Y_n+\gre_j &\mbox{ w.p. }24\mu(i)\mu(j)\mu(k)\mu(\ell);\\
	Y_n-2\gre_i &\mbox{ w.p. }24\mu(j)\mu(k)\mu(\ell)\mu(m).\\
	\end{array}\right.
	\end{equation}}
\normalsize{From this, we deduce using simple algebra that}
\begin{equation*}
\Delta_i:=\esp{L\left(Y_{n+1}\right) - L\left(Y_n\right) | Y_n = x_i.\gre_i}=\lambda_i(\mu)x_i+\beta_i(\mu), 
\end{equation*}
for some bounded $\beta_i(\mu)$, and for
\begin{multline}
\label{eq:Gdeflambdai}
\lambda_i(\mu)=\;8\mu^4(i) + 24\mu^3(i)\sum_{j\ne i} \mu(j) + 24\mu^2(i)\sum_{j\ne i} \mu^2(j)\\ + 8\mu(i)\sum_{j\ne i} \mu^3(j)
+24\mu^2(i) \sum_{\substack{j,k\ne i}} \mu(j)\mu(k)
-44\sum_{\substack{j,k,\ell\ne i}} \mu^2(j)\mu(k)\mu(\ell)\\-24\sum_{\substack{j,k\ne i}} \mu^2(j)\mu^2(k)
-8\sum_{\substack{j,k\ne i}}\mu(j)\mu^3(k)-96\sum_{\substack{j,k,\ell,m\ne i}} \mu(j)\mu(k)\mu(\ell)\mu(m). 
\end{multline}
Consequently, as the above is negative there exists $a_1^{*}$ such that $\Delta_i <0$ whenever $x_i \ge a_1^{*}$.

\medskip

(ii) For any $i\in J$, and any integer $x_{i}\geq{2}$, the transitions of $\suite{Y_n}$ from the state $x_{i}.\gre_i$ can be retrieved in a similar fashion to 
(\ref{eq:GtransY1}). It follows that 
\begin{equation*}
\Delta'_{i}=\esp{L\left(Y_{n+1}\right) - L\left(Y_n\right) | Y_n = x_{i}.\gre_{i}}=\nu_{i}(\mu)x_{i}+\beta'_{i}(\mu), 
\end{equation*}
for some bounded $\beta'_{i}(\mu)$, and setting $H=\{i,j,k\}$ as the only element of $\maJ$ such that $i\in H$, we obtain that 
\scriptsize{
\begin{multline}
\label{eq:InCnu_i}
\nu_i(\mu)=8\mu^4(i) + 24\mu^3(i)\sum_{\ell\in\maV\backslash\{i\}} \mu(\ell) + 24\mu^2(i)\sum_{\ell\in\maV\backslash \{i\}} \mu^2(\ell) + 8\mu(i)\sum_{\ell\in\maV\backslash\{i\}} \mu^3(\ell)\\
-8 \sum_{\substack{\ell\in \overline{H}}} \mu(j) \mu^3(\ell)
-4 \sum_{\substack{\ell\in\overline{H}:\\\textrm{ends with }kk}}  \mu(j)\mu^2(k)\mu(\ell)
-20\sum_{\substack{\ell\in\overline{H}:\\\textrm{otherwise}}} \mu(j)\mu^2(k)  \mu(\ell)\qquad\qquad\qquad\;\;\\
-48 \mu(j)\mu(k) \sum_{\substack{\ell\in\overline{H}}} \mu^2(\ell)
-4 \sum_{\substack{\ell\in\overline{H}:\\\textrm{ends with }\ell\ell}} \mu(j)\mu^2(\ell)\mu(m)
-40 \sum_{\substack{\ell\in\overline{H}:\\\textrm{otherwise}}} \mu(j)\mu^2(\ell)\mu(m)\qquad\quad\;\;\\
+ 48 \mu^2(i)\mu(j)\mu(k)-8 \mu(j)\mu(k)\sum_{\substack{\ell,m\in\overline{H}:\\\textrm{ends with }jk}} \mu(\ell)\mu(m)\
-80 \mu(j)\mu(k)\sum_{\substack{\ell,m\in\overline{H}:\\\textrm{otherwise}}} \mu(\ell)\mu(m)\qquad\;\\
-96 \sum_{\substack{\ell,m\in\overline{H}}} \mu(j)\mu(\ell)\mu(m)\mu(p)
+24 \mu^2(i)\sum_{\substack{\ell\in\overline{H}}} \mu(j)\mu(\ell)
+24 \mu^2(i)\sum_{\substack{\ell,m\in\overline{H}}} \mu(\ell)\mu(m)\qquad\qquad\;\;\;\\
-24\sum_{\substack{\ell\in\overline{H}}} \mu^2(j)\mu^2(\ell)
-4\sum_{\substack{\ell\in\overline{H}:\\\textrm{ends with }jj}} \mu^2(j)\mu(\ell)\mu(m)
-40\sum_{\substack{\ell\in\overline{H}:\\\textrm{otherwise}}} \mu^2(j)\mu(\ell)\mu(m)\qquad\qquad\;\;\;\;\\
-8\sum_{\substack{\ell\in\overline{H}}} \mu^3(j)\mu(\ell)
+24\mu(i)\sum_{\substack{j,k\in H}}\mu(j)\mu^2(k)
-8\sum_{\substack{\ell,m\in\overline{H}}} \mu^3(\ell)\mu(m)
-24\sum_{\substack{\ell,m\in\overline{H}}} \mu^2(\ell)\mu^2(m)\;\;\;\\
-4\sum_{\substack{\ell,m,p\in\overline{H}:\\ \textrm{ends with }\ell\ell}} \mu^2(\ell)\mu(m)\mu(p)-40\sum_{\substack{\ell,m,p\in\overline{H}:\\\textrm{otherwise}}} \mu^2(\ell)\mu(m)\mu(p)
-96\sum_{\substack{\ell,m,p,s\in\overline{H}}} \mu(\ell)\mu(m)\mu(p)\mu(s).
\end{multline}}
\normalsize{Thus, there exists $a_{2}^{*}$ such that $\Delta'_{i} <0$ whenever $x_{i} \ge a_{2}^{*}$.}

\medskip
(iii) For any $i\ne j$ such that $\{i,j\}$ is not included in an hyperedge of the family $\mathcal J$, 
for any integers $x_{i},x_{j}>0$, we obtain that 
\begin{equation*}
\Delta_{ij}:=\esp{L\left(X_{n+1}\right) - L\left(X_n\right) | X_n = x_{i}.\gre_i+x_{j}.\gre_j}=\lambda_{ij}(\mu)x_{i}+\lambda_{ji}(\mu)x_{j}+\beta_{ij}(\mu), 
\end{equation*}
for a bounded $\beta_{ij}(\mu)$, and for 
\begin{equation}
\label{eq:Gdeflambdaij,ji}
\lambda_{ij}(\mu)=2\Big(\mu(i)-\sum\limits_{\ell \in \maV\backslash \{i,j\}}\mu(\ell)\Big)\textrm{ and }\lambda_{ji}(\mu)=2\Big(\mu(j)-\sum\limits_{\ell \in \maV\backslash \{i,j\}}\mu(\ell)\Big).
\end{equation}
Now observe that $\maV\backslash \{i,j\}\in\mathcal{T}(\mbH)$, so $\sum\limits_{\ell \in \maV\backslash \{i,j\}}\mu(\ell)>\displaystyle\frac{1}{3}$ and then $\lambda_{ij}(\mu)<0$ and $\lambda_{ji}(\mu)<0$.
Thus there exists $a_{3}^{*}$ such that $\Delta_{ij}<0$ whenever ${x_{i} \vee x_{j}} \ge a_{3}^{*}$. 

\medskip

(iv) For any $i,j$ such that $i\ne j$ and $\{i,j\}\subset H$ for some $H\in \mathcal J$, for any integers $x_{i},x_{j}>0,$ we obtain that
\begin{equation*}
\Delta'_{ij} :=\esp{L\left(X_{n+1}\right) - L\left(X_n\right) | X_n = x_{i}.\gre_{i}+x_{j}.\gre_{j}}
       = \nu_{ij}(\mu)x_i + \nu_{ji}(\mu)x_j +\beta'_{ij}(\mu)
\end{equation*}
for a bounded $\beta'_{ij}(\mu)$ and 
\begin{equation}
\label{eq:Gdefnuij,ji}
\nu_{ij}(\mu)=2\Big(\mu(i)-\sum\limits_{\ell \in \overline{H}}\mu(\ell)\Big)\textrm{ and }\nu_{ji}(\mu)=2\Big(\mu(j)-\sum\limits_{\ell \in \overline{H}}\mu(\ell)\Big).
\end{equation}

But $\overline{H}\in\mathcal{T}(\mbH)$, so $\sum\limits_{\ell \in\overline{H}}\mu(\ell)>\displaystyle\frac{1}{3}$ and thus $\nu_{ij}(\mu)<0$ and $\nu_{ji}(\mu)<0$.\\
\normalsize{Again, there exists $a_{4}^{*}$ such that $\Delta'_{ij}<0$ whenever ${x_{i} \vee x_{j}} \ge a_{4}^{*}$.}

\medskip

	(v) We finally consider the case where $X_n=x_i.\gre_{i}+x_j.\gre_{j}+x_k.\gre_{k}$ for $H=\{i,j,k\}$, for some $H\in\maJ$, 
	and integers $x_i,x_j$ and $x_k$ such that $x_i,x_j \geq x_k>0$. \\
	\begin{align*}
	\Delta_H &:=\esp{L\left(X_{n+1}\right) - L\left(X_n\right) | X_n = x_{i}.\gre_i+x_{j}.\gre_j+x_{k}.\gre_k}\\
	               & =\alpha_{i}(\mu)x_{i}+\alpha_{j}(\mu)x_{j}+\alpha_{k}(\mu)x_k+\beta_H(\mu), 
	\end{align*}
	for a bounded $\beta_H(\mu)$, and for 
	\begin{equation}
	\label{eq:Gdeflaphai,j,k}
	\alpha_{i}(\mu)=2\Big(\mu(i)-\sum\limits_{\ell \in \overline{H}}\mu(\ell)\Big),\;\alpha_{j}(\mu)=2\Big(\mu(j)-\sum\limits_{\ell \in \overline{H}}\mu(\ell)\Big)\textrm{ and }
	\alpha_{k}(\mu)=2\mu(k).
	\end{equation}
	Now observe that $\overline{H}\in\mathcal{T}(\mbH)$, so $\alpha_{i}(\mu)<0$ and $\alpha_{j}(\mu)<0$.  
	From this, we deduce as above the existence of an integer $a_5^{*}$ such that $\Delta_H<0$ whenever 
	$x_i\vee x_j \ge a_5^{*}$.

%
\medskip

To conclude, if we let $\mathcal{K}$ be the finite set
\[{\mathcal K=\left\lbrace x\in\maE\;:\;x_i\le \max\Big(a_1^*,...,a_5^*,2\Big);\; i\in V \right\rbrace,}\]
then if follows from the above arguments that for any $x\in\maE\cap\mathcal{K}^c$ 
and any $n\in\N$, 
$$\esp{L\left(Y_{n+1}\right) - L\left(Y_{n}\right) | Y_n=x}<0.$$
We deduce from Lyapunov-Foster Theorem (see \cite{Bre99}, \S  5.1) that the chain $\suiten{Y_n}$ is positive recurrent. 
This is the case in turn for the chain $\suiten{X_n}$.
\end{proof}

\begin{remark}
\rm 
Observe that the only incomplete (in the sense of Theorem \ref{thm:suff3unifincomplet}) $3$-uniform hypergraph of order 
4 would be obtained from the complete one by deleting only one vertex. However, as easily seen the transversal number of the resulting hypergraph is 1, so the latter is non-stabilizable from Proposition \ref{prop:superstar}.
 \end{remark}


In the following examples we show how stability can be shown for various $3$-uniform incomplete hypergraphs using 
Theorem \ref{thm:suff3unifincomplet}, 



\begin{corollary}
Consider an incomplete $3$-uniform hypergraph $\mathbb{H}'$ satisfying the assumptions of Theorem \ref{thm:suff3unifincomplet}. 
Recall (\ref{eq:defmuminmax}), and define the sets 
\[\mathscr A(\mbH'):=\biggl\{\mu\in\mathscr M(V)\,:\, \displaystyle\frac{\mumaxi}{\mumini}<\left(\frac{2q^4-9q^3+12q^2-13q+12}{6q^2+10q+24}\right)^{1/4}\biggl\}\,\]
and 
\begin{equation*}
\mathscr S_1(\mbH'):=\mathscr A(\mbH')\,\cap\,\mathscr N_2(\mathbb H')\, \cap\mathscr N^{\scriptsize{-}}_3(\mathbb H').	\end{equation*}
Then the model 
$(\mathbb{H}',\textsc{ml},\mu)$ is stable for any  $\mu\in\mathscr S_1(\mbH').$	
\end{corollary}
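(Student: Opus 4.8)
The plan is to deduce the corollary directly from Theorem \ref{thm:suff3unifincomplet}, by showing that the single, explicit ratio condition defining $\mathscr A(\mbH')$ is already strong enough to force every drift coefficient $\lambda_i(\mu)$ and $\nu_i(\mu)$ to be negative. Since both $\mathscr S_1(\mbH')$ and $\mathscr S(\mbH')$ contain the same factor $\mathscr N_2(\mathbb H')\cap\mathscr N^{\scriptsize{-}}_3(\mathbb H')$, it suffices to establish the inclusion
\[\mathscr A(\mbH')\ \subset\ \Bigl\{\mu\in\mathscr M(V):\ \max_{i\in J}\lambda_i(\mu)\ \vee\ \max_{i\in\bar J}\nu_i(\mu)<0\Bigr\},\]
after which $\mathscr S_1(\mbH')\subset\mathscr S(\mbH')$ and the positive recurrence of $(\mbH',\textsc{ml},\mu)$ follows from Theorem \ref{thm:suff3unifincomplet}.

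First I would treat $\lambda_i(\mu)$, given by (\ref{eq:Gdeflambdai}). The key structural fact is that $\lambda_i(\mu)$ is a \emph{homogeneous} polynomial of degree $4$ in the entries $\{\mu(j):j\in V\}$, whose monomials split cleanly into a positive part and a negative part. The idea is then to bound the positive part from above by replacing every factor $\mu(\cdot)$ by $\mumaxi$, and to bound the negative part from below (in absolute value) by replacing every factor $\mu(\cdot)$ by $\mumini$, using $\sum_{j}\mu(j)=1$ to consolidate the multi-index sums and keep the counts sharp. This yields an estimate of the form
\[\lambda_i(\mu)\ \le\ c_{+}(q)\,\mumaxi^{4}-c_{-}(q)\,\mumini^{4},\]
where $c_{+}(q)$ and $c_{-}(q)$ are the total positive and negative monomial weights; a direct (if lengthy) count identifies them as $c_{+}(q)=6q^{2}+10q+24$ and $c_{-}(q)=2q^{4}-9q^{3}+12q^{2}-13q+12$, so that $c_{-}(q)/c_{+}(q)$ is precisely the ratio appearing in $\mathscr A(\mbH')$. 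Consequently $\lambda_i(\mu)<0$ as soon as $(\mumaxi/\mumini)^{4}<c_{-}(q)/c_{+}(q)$, which is exactly the inequality defining $\mathscr A(\mbH')$.

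Next I would run the same scheme for $\nu_i(\mu)$, given by the much longer expression (\ref{eq:InCnu_i}). It is again homogeneous of degree $4$, so the same extremal substitution ($\mumaxi$ in the positive monomials, $\mumini$ in the negative ones) applies verbatim; the only extra point is to verify that the ratio requirement this produces is \emph{no stronger} than the one coming from $\lambda_i$, so that the single bound in $\mathscr A(\mbH')$ simultaneously guarantees $\lambda_i(\mu)<0$ and $\nu_i(\mu)<0$. Taking the worst case over all nodes $i$ and over the two families of coefficients then gives the desired inclusion.

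The hard part will be the bookkeeping in this second step. The expression (\ref{eq:InCnu_i}) for $\nu_i(\mu)$ contains a large number of multi-index sums separated by combinatorial side-conditions (the ``ends with $kk$'' versus ``otherwise'' distinctions, and so on), and one must count their monomials correctly and check that their combined positive and negative weights are dominated, after the $\mumini/\mumaxi$ substitution, by the ratio $c_{-}(q)/c_{+}(q)$. The degree-$4$ homogeneity together with the elementary bounds $\mumini\le\mu(\cdot)\le\mumaxi$ renders each individual estimate routine, so the genuine difficulty is organizational: correctly enumerating all the terms and their side-conditions and confirming that $\lambda_i$ supplies the binding constraint.
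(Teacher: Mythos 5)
Your overall route is exactly the paper's: prove the inclusion
\[\mathscr A(\mbH')\subset\left\{\mu\in\mathscr M(V):\ \max_{i}\lambda_i(\mu)\vee\max_{i}\nu_i(\mu)<0\right\},\]
deduce $\mathscr S_1(\mbH')\subset\mathscr S(\mbH')$, and invoke Theorem \ref{thm:suff3unifincomplet}; the extremal substitution ($\mumaxi$ in every positive monomial, $\mumini$ in every negative one, then compare total weights) is also the natural way to carry out what the paper dismisses as ``a simple algebra''. However, your attribution of the binding constraint is backwards, and this is precisely the step you single out as the crux. Counting the monomials of $\lambda_i(\mu)$ in (\ref{eq:Gdeflambdai}) (equivalently, reading the transitions (\ref{eq:GtransY1}): a transition $Y_n\mapsto Y_n+a\gre_i+\cdots$ of probability $p$ contributes $2ap$ to the coefficient of $x_i$) gives positive weight $8+56(q-1)+24\binom{q-1}{2}=2(6q^2+10q-12)$ and negative weight $2(2q^4-9q^3+14q^2-9q+2)$ --- \emph{not} the polynomials appearing in $\mathscr A(\mbH')$. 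The polynomials of $\mathscr A(\mbH')$ come from $\nu_i(\mu)$, i.e.\ from the nodes $i\in J$ lying in a removed hyperedge $H=\{i,j,k\}\in\maJ$: the positive weight of (\ref{eq:InCnu_i}) is $8+56(q-1)+48+48+48(q-3)+24\binom{q-3}{2}=2(6q^2+10q+24)$, exactly twice the denominator in $\mathscr A(\mbH')$. This is what one should expect structurally: such a node has strictly fewer matching opportunities, which creates additional positive terms (e.g.\ $48\mu^2(i)\mu(j)\mu(k)$ and $24\mu(i)\mu(j)\mu^2(k)$, arising from arrival patterns that can no longer be matched because $\{i,j,k\}\notin\maH'$) and removes negative ones, so $\nu_i$ --- not $\lambda_i$ --- yields the smaller, hence binding, ratio.

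Consequently, the verification you defer to the end (``the ratio requirement produced by $\nu_i$ is no stronger than the one coming from $\lambda_i$'') would fail if actually carried out: cross-multiplying shows that
\[\frac{2q^4-9q^3+12q^2-13q+12}{6q^2+10q+24}\ <\ \frac{2q^4-9q^3+14q^2-9q+2}{6q^2+10q-12}\qquad\mbox{for all }q\ge 5,\]
so the $\nu$-requirement is strictly stronger. The repair stays entirely within your framework: derive the ratio of $\mathscr A(\mbH')$ from the counts of $\nu_i$, and then note that the same condition forces $\lambda_i(\mu)<0$ a fortiori, since the $\lambda$-ratio dominates the $\nu$-ratio. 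With the roles of the two families swapped in this way, your argument does establish the required inclusion and hence the corollary. (A mitigating remark: the statement of Theorem \ref{thm:suff3unifincomplet} itself writes $\lambda_i$ for $i\in J$ and $\nu_i$ for $i\in\bar J$, whereas its proof derives $\lambda_i$ for $i\in\bar J$ and $\nu_i$ for $i\in J$; this typo in the paper may be the source of your inversion, but the counting above shows unambiguously which family the ratio in $\mathscr A(\mbH')$ belongs to.)
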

\begin{proof}
Recalling (\ref{eq:Gdeflambdai}) and (\ref{eq:InCnu_i}), a simple algebra shows that 
\[\mathscr A(\mbH') \subset \biggl\{\mu\in\mathscr M(V):\left(\max\limits_{i\in J} \lambda_i(\mu)\vee \max\limits_{i\in \bar J} \nu_i(\mu)\right)<0\biggl\},\]
thus $\mathscr S_1(\mbH')\subset \mathscr S(\mbH')$. 
\end{proof}

\begin{ex}
	\label{ex:UnifDisIncomplet}
	\rm
	Observe that for any such $\mbH'=(V,\maH')$ satisfying the assumptions of Theorem \ref{thm:suff3unifincomplet}, 
	the model $(\mathbb{H}',\textsc{ml},\mu_{\textsc{u}})$ is stable for $\mu_{\textsc{u}}$ the uniform distribution on $V$. 
	Indeed, we have $\mu_{\textsc{u}}\in\mathscr S_1(\mbH')$.  
	To see this, first observe that $\frac{2q^4-9q^3+12q^2-13q+12}{6q^2+10q+24}>1$. Moreover, it is immediate that 
	$\mu_{\textsc{u}}\in\mathscr N_3^-(\mbH')$. 
	It remains to show that $\mu_{\textsc{u}}\in\mathscr N_2(\mbH')$. 
	We proceed in three steps. First, for $q=5$ the only incomplete $3$-uniform graph in the sense of Theorem \ref{thm:suff3unifincomplet} is the complete hypergraph on $\llbracket 1,5 \rrbracket$ minus one vertex, say $\{1,2,3\}$. 
	It is then easily seen that $\{4,5\}$ is the only minimal transversal of $\mbH'$. So $\tau(\mbH')=2$, in a way that for 
	all $T'\in\mathcal T(\mbH')$, $\mu_{\textsc{u}}(T')\ge 2/5> 1/3,$ 
	showing that $\mu_{\textsc{u}} \in \mathscr N_2(\mbH')$. 
	
	Now, if $q=6$ there are two incomplete $3$-uniform graph in the sense of Theorem \ref{thm:suff3unifincomplet}: 
	the complete $3$-uniform hypergraph on $\llbracket 1,6 \rrbracket$ minus one hyperedge, say $\{1,2,3\}$; and the 
	complete $3$-uniform hypergraph on $\llbracket 1,6 \rrbracket$ minus two disjoint hyperedges, say $\{1,2,3\}$ and $\{4,5,6\}$. In both cases, $\{4,5,6\}$ is a minimal transversal of $\mbH'$, thus $\tau(\mbH')=3$, and so 
	$\mu_{\textsc{u}}(T')\ge 3/6 > 1/3,$ for all $T'\in\mathcal T(\mbH)$, 
	proving again that $\mu_{\textsc{u}} \in \mathscr N_2(\mbH')$. 
	
	We now address the case where $q>6$. First observe that 
	\begin{equation}
	\label{eq:trampo1}
	 q-2-\left\lfloor {q\over 3}\right\rfloor > {q\over 3}.
	\end{equation}
	Then, let $p=|\mathcal J|$ (using the notation of Theorem \ref{thm:suff3unifincomplet}), 
	and denote 
	$\mathcal J=\{H_1,...,H_p\}$. It is easily seen that a transversal of $\mbH$ can be constructed 
	from any minimal transversal of $\mbH'$, by induction, as follows: 
	\begin{itemize}
	\item Take a minimal transversal $T'$ of $\mbH'$, 
	and set $\maH_0:=\maH'$ and $T_0:=T'$. 
	\item For any  $i=1,...,p$, set $\maH_i = \maH_{i-1} \cup \{H_i\}$ and set $T_i$, a 
	transversal of $(V,\maH_i)$ of minimal size among those including 
	$T_{i-1}$.  (${T_{i}}$ necessarily exists since $T_{i-1}\cup \{H_i\}$ is a transversal of $(V,\maH_{i})$, as easily seen by induction.) 
	\item We obtain $\maH=\maH_p$ by construction, and $T:=T_p$ is a transversal of $\mbH$. 
	\end{itemize}
	We claim that 
	\begin{equation}
	\label{eq:trampo2}
	|T| \le |T'|+p.
	\end{equation}
	To see this, observe that for any $i=1,...,p$ we have the following alternative: either $H_i \cap T_{i-1} = \emptyset$, in which case we can take $T_i $ of the form $T_{i-1}\cup \{k\}$ for any $k\in H_i$, or 
	$H_i \cap T_{i-1} \ne  \emptyset$, in which case $T_i = T_{i-1}$. In all cases we have that $|T_i| \le |T_{i-1}| + 1$, and (\ref{eq:trampo2}) follows by induction. 
	Observing that $|T| \ge \tau(H)=q-2$, that, as the $H_i$'s are disjoint, 
	$p\le \lfloor {q\over 3}\rfloor$,  and using (\ref{eq:trampo1}) and (\ref{eq:trampo2}), we finally obtain that 
	\[\mu_{\textsc{u}}(T') = {|T'| \over q} \ge {|T|-p\over q} >{1\over 3},\]
	hence, once again $\mu_{\textsc{u}} \in \mathscr N_2(\mbH')$. 
	
	To conclude, $\mu_{\textsc{u}}$ is in all cases, an element of $\mathscr S_1(\mbH')$, implying that the model $(\mathbb{H}',\textsc{ml},\mu_{\textsc{u}})$ is stable for all such $\mbH'$. 
	\end{ex}

\section{Conclusion and perspectives}
\label{sec:conclu}
In this paper, we have introduced a new declination of stochastic matching models, by allowing the matching structure to be an hypergraph, rather than just a graph. 
By doing so, we generalize the recently introduced general stochastic matching models (GM). This class of models appears to have a wide range of applications in operations management, healthcare and assemble-to-order systems. 
After formally introducing the model, we have proposed a simple Markovian representation, under IID assumptions. 
We have then addressed the general question of stochastic stability, viewed as the positive recurrence of the underlying Markov chain. For this class of systems, solving this elementary and central question turns out to be an intricate problem. 
As the results of Sections \ref{sec:Ncond} and \ref{sec:instable} demonstrate, stochastic matching models on hypergraphs are in general, difficult to stabilize. Unlike the GM on graphs, the non-emptiness of the stability region on matching models on hypergraphs depends on a collection of conditions on the 
geometry of the compatibility hypergraph: rank, anti-rank, degree, size of the transversals, existence of cycles, and so on. 

Nevertheless, we show in Section \ref{sec:stable} that the 'house' of stable systems is not empty, but shelters models on 
various uniform hypergraphs that are complete, or complete up to a partition of their nodes (which is a reasonable assumption regarding kidney exchange programs with $3$-cycles, in which case, according to the compatibility of blood types and immunological characteristics, most but not all hyperedges of size $3$ appear in the compatibility graph). We provide the exact stability region of the system in the first case, and a lower bound in the second. For this we resort to ad-hoc 
multi-dimensional Lyapunov techniques. 

There is still much to do regarding this class of systems. Providing the precise stability region of 
a wider class of systems appearing in other applications is a tedious task, and is the subject of our ongoing research on this topic. 
As the present results tend to demonstrate, such advances are likely to be obtained only on a case-by-case basis. 
To go beyond the stability study, crucial questions of interest are, among others:  
the performance evaluation in steady state and a qualitative comparison of systems and matching policies. 
We believe that the present work thus represents a good starting point for a fruitful avenue on research on such systems.


\begin{thebibliography}{10}






\bibitem{AW11}
I.~Adan and G.~Weiss (2012). 
\newblock Exact FCFS matching rates for two infinite multi-type sequences.
\newblock {\em Operations Research}, {\bf 60}(2):475--489, 2012.


\bibitem{ABMW17}
{I Adan, A Busic, J. Mairesse and G. Weiss}. (2017). Reversibility and further properties of the FCFM Bipartite matching model. 
ArXiv math.PR 1507.05939.  

\bibitem{AKRW18}
I Adan, I. Kleiner, R. Righter and G. Weiss. (2018). {FCFS parallel service systems and matching models}. 
{\em Performance Evaluation} {\bf 127}: {253--272}. 

\bibitem{Ber89}
C. Berge. (1989). Hypergraphs : Combinatorics of Finite Sets. North-Holland,
3rd edition, 1989.

\bibitem{BDPS11}
{O. Boxma, I. David, D. Perry and W. Stadje} (2011). 
A new look at organ transplantation models and double matching queues. {\em Probab. Engineer. Inf. Sciences} {\bf 25}: 135-155.

\bibitem{Bre99}
Br\'emaud, P. (1999). Markov Chains: Gibbs Fields, Monte Carlo Simulation, and Queues (Texts Appl. Math. {\bf 31}). Springer, new York.

\bibitem{BC15} 
B. Buke and  H Chen (2015). Stabilizing Policies for Probabilistic Matching Systems. {\em Queueing Syst. Theor. Appl.} \textbf{80}(1-2): 35--69.
  
\bibitem{BC17} 
B. Buke and H Chen (2017). Fluid and diffusion approximations of probabilistic matching systems. {\em Queueing Syst. Theor. Appl.} \textbf{86}(1-2): 1--33.

\bibitem{BGM13}
A. Bu$\check{\mbox{s}}$i\'c, V. Gupta and  J. Mairesse (2013).
Stability of the bipartite matching model. 
{\em Advances in Applied Probability} 45(2): 351-378. 

\bibitem{BM14} 
A. Bu$\check{\mbox{s}}$i\'c, S. Meyn (2014). Approximate optimality with bounded regret in dynamic matching models. ArXiv math.PR 1411.1044, 2014.

\bibitem{CKW09}
R.~Caldentey, E.H. Kaplan, and G.~Weiss (2009). 
\newblock {FCFS} infinite bipartite matching of servers and customers.
\newblock {\em Adv. Appl. Probab}, 41(3):695--730, 2009.

\bibitem{FMM95}
Fayolle, G., Malyshev, V. A. And Menshikov, M. (1995). Topics in the Constructive Theory of Countable Markov Chains. Cambridge University Press.

\bibitem{GW14}
I.~Gurvich and A.~Ward (2014). 
\newblock On the dynamic control of matching queues.
\newblock {\em Stochastic Systems}, {\bf 4}(2), 1--45, 2014.

\bibitem{Hall35}
Ph. Hall. (1935). On Representatives of Subsets. {\em J. London Math. Soc.}, {\bf 10}(1) 26--30, 1935.


\bibitem{MaiMoy16}
J. Mairesse and P. Moyal (2016). 
Stability of the stochastc matching model. 
{\em Journal of Applied Probability} 53(4): 1064-1077. 

\bibitem{MBM17}
{P.Moyal, A. Bu$\check{\mbox{s}}$i\'c and J. Mairesse} (2017). A product form and a sub-additive theorem for the general stochastic matching model. ArXiv math.PR/1711.02620. 

\bibitem{MBM18}
{P.Moyal, A. Bu$\check{\mbox{s}}$i\'c and J. Mairesse} (2018).  Loynes construction for the Extended bipartite matching. 
{ArXiv} math.PR/1803.02788;

\bibitem{MoyPer17}
P. Moyal and O. Perry (2017). 
On the instability of matching queues. {\em Annals of Applied Probability} {\bf 27}(6): 3385-3434. 

\bibitem{OW19}
{E. Özkan and A. Ward.} (2019). Dynamic matching for real-time ridesharing. To appear in {\em Stochastic Systems}. 

\bibitem{PH95} 
P. E. Haxell (1995). A condition for matchability in hypergraphs. {\em Graphs and Combinatorics} {\bf 11}(3):245–248.

\bibitem{NS16}
M. Nazari and A.L. Stolyar (2016). Optimal control of general dynamic matching systems. {\em Preprint}, ArXiv math. PR 1608.01646.

\bibitem{TW08}
R. Talreja and W. Whitt (2008). 
Fluid models for overloaded multiclass many-server queueing systems with first-come, first-served routing.
{\em Management Science}, {\bf 54}(8), 1513--1527, 2008.

\end{thebibliography}
\end{document}